\newtheorem{theorem}{Theorem}[section] 
\newtheorem{lemma}[theorem]{Lemma}     
\newtheorem{cor}[theorem]{Corollary}
\newtheorem{prop}[theorem]{Proposition}
\newtheorem{defn}[theorem]{Definition}
\newtheorem{rem}[theorem]{Remark}
\numberwithin{equation}{section}
\numberwithin{theorem}{section}
\newcommand{\lij}{\sqrt{\lambda_i+\lambda_j}}
\newcommand{\LIJ}{\lambda_i+\lambda_j}
\newcommand{\IK}{\lambda_i+\lambda_k}
\newcommand{\JK}{\lambda_j+\lambda_k}
\newcommand{\IJK}{(\LIJ)(\lambda_j+\lambda_k)(\lambda_k+\lambda_i)}
\newcommand{\ld}{\sqrt{\lambda_1+\lambda_d}}
\newcommand{\X}{\mathcal{X}}
\newcommand{\Y}{\mathcal{Y}}
\newcommand{\Z}{\mathcal{Z}}
\newcommand{\N}{\mathcal{N}}
\newcommand{\mm}{\mathcal{M}}
\newcommand{\W}{\mathcal{W}}
\newcommand{\px}{\mathcal{P}_{2}(X)}
\newcommand{\ppm}{\mathcal{P}_{2}(M)}
\newcommand{\pr}{\mathcal{P}_2(\R^d)}
\newcommand{\ppac}{\mathcal{P}^{\mathrm{ac}}_{2}(\R^d)}
\newcommand{\pmac}{\mathcal{P}^{\mathrm{ac}}_{2}(M)}
\newcommand{\Sym}{\mathrm{Sym}(d,\R)}
\newcommand{\SSym}{\mathrm{Sym}(2,\R)}
\newcommand{\sym}{\mathrm{Sym}^+(d,\R)}
\newcommand{\diag}{\mathrm{diag}}
\newcommand{\tr}{\mathrm{tr}}
\newcommand{\ten}{{}^{\mbox{\tiny T\hspace{-1.4pt}}}}
\newcommand{\R}{\mathbb{R}}
\newcommand{\id}{\mathrm{id}}
\newcommand{\co}{\cos \theta}
\newcommand{\coo}{\cos \theta_0}
\newcommand{\eep}{\epsilon\mathrm{xp}}
\newcommand{\ccag}{c_{\alpha\gamma}(r,\theta_0)}
\newcommand{\crg}{c_{\alpha\gamma}(r,\theta)}
\newcommand{\cij}{c_{i j}(r,\theta)}
\newcommand{\ccij}{c_{i j}(r,\theta_0)}
\newcommand{\sij}{s_{i j}(r,\theta)}
\newcommand{\ssij}{s_{i j}(r,\theta_0)}
\newcommand{\sik}{s_{i k}(r,\theta)}
\newcommand{\ssik}{s_{i k}(r,\theta_0)}
\title[]{On Wasserstein geometry of the space of Gaussian measures}
\author[]{Asuka TAKATSU}
\date{}
\begin{document}
\maketitle

\begin{abstract}
The space of Gaussian measures 
on a Euclidean space is geodesically convex 
in the $L^2$-Wasserstein space.
This space is a finite dimensional manifold 
since Gaussian measures are parameterized 
by means and covariance matrices. 
By restricting to the space of Gaussian measures 
inside the $L^2$-Wasserstein space, 
we manage to provide detailed descriptions of
the $L^2$-Wasserstein geometry 
from a Riemannian geometric viewpoint.
We first construct a Riemannian metric 
which induces the $L^2$-Wasserstein distance.
Then we obtain a formula 
for the sectional curvatures of the space of Gaussian measures, 
which is written out in terms of the eigenvalues of the covariance matrix.
\end{abstract}
\section{Introduction}
In this paper, 
we give a formula for sectional curvatures of 
the space of Gaussian measures on $\R^d$ 
with the $L^2$-Wasserstein metric.
Let $N(m,V)$ be the Gaussian measure with mean $m$ 
and covariance matrix $V$.
Namely  $m$ is a vector in $\R^d$
and $V$ is a symmetric positive definite matrix of size $d$ and 
its Radon-Nikodym derivative is given by 
\[
 \frac{d N(m,V)}{d x}= 
        \frac{1}{\sqrt{\det (2\pi V)}}
        \exp\left[ -\frac{1}{2} 
            \langle x -m,V^{-1}(x-m) \rangle \right].
\]
We denote by $\N^d$ the space of Gaussian measures on $\R^d$.
Since Gaussian measures depend only on 
the mean $m$ and the covariance matrix $V$,
the space $\N^d$ is identified with $\R^d \times \sym$, 
where $\sym$ is the space of symmetric positive definite matrices
of size $d$.

The $L^2$-Wasserstein space is the subspace of probability measures 
equipped with a certain distance.
Let $\ppac$ be the set of absolutely continuous probability measures 
with finite second moments on $\R^d$.
Then $\ppac$ is a geodesic space 
and 
all geodesics are given by push-forward measure.
In view of these facts, 
Otto \cite{Ot} regarded 
$\ppac$ as an infinite dimensional formal Riemannian manifold  
and analyzed  the porous medium equations 
as gradient flows on $\ppac$. 

A foundation for this framework was carefully laid out 
by Carrillo-McCann-Villani~\cite{CMV}.
They introduced the new space, \textit{Riemannian length space}.
In short, this space is a length space 
which has an exponential map defined on some tangent vector space 
with a metric.
They proved that $\ppac$ is a Riemannian length space and 
its metric induces the $L^2$-Wasserstein distance.
We call this metric the $L^2$-Wasserstein metric.

McCann showed in \cite{Mc} that 
varying the mean is equivalent to 
a Euclidean translation and 
$\N^d_0$ is a geodesically convex subspace 
of $\ppac$.
When we consider the $L^2$-Wasserstein geometry on $\N^d$,
it suffices to consider the geometry on covariance matrix variations.
We use $\N^d_0$ for the set 
of all Gaussian measures with mean $0$.
We denote by $N(V)$ 
the Gaussian measure with mean $0$, 
and covariance matrix $V$.

In the Riemannian length space, 
if a geodesic from $N(V)$ 
with direction $\psi$ 
passes thorough $N(U)$,
then the gradient of $\psi$ 
is given as a linear map associated 
with a symmetric matrix depending only on $V,U$.
Thus the tangent space at each point can be regarded as 
the space of symmetric matrices $\mathrm{Sym}(d,\R)$.
This identification coincides with 
the viewpoint from the differential structure;
since $\N^d_0$ is identified with $\sym$, 
which is an open subset of 
$\mathrm{Sym}(d,\R)$,
we can consider the tangent space of $\N^d_0$ 
as  $\mathrm{Sym}(d,\R)$.
These observations enable us to 
obtain a formula for the sectional curvature 
of $\N^d_0$ 
by restricting to a geodesically  convex submanifold $\N^d_0$ 
of $\ppac$.
%
\begin{theorem}\label{curv}
For an orthogonal matrix $P$ and 
positive numbers $\{\lambda_i\}_{i=1}^d$, 
we set  $V= P\diag[\lambda_1,\ldots,\lambda_d] \ten P$,
where  $\ten P$ is the transpose matrix of $P$.
Then we can consider the tangent space to $\N^d_0$ 
at $N(V)$ spanned by 
\[
 \left\{
  e_+= \frac{P\left( E_{11}+E_{dd} \right) \ten P}{\ld},\ 
  e_{i j}=\frac{P\left( E_{ii}-E_{j j} \right) \ten P}{\lij},\ 
  f_{i j}=\frac{  P\left( E_{i j}+E_{j i} \right) \ten P}{\lij}   
 \right\}_{1\leq i<j \leq d},
\] 
where $E_{i j}$ is an $(i,j)$-matrix unit, 
whose $(i,j)$-component is $1$, $0$ elsewhere.
Then we obtain the following expressions of the 
sectional curvatures with respect to the vectors:
\begin{align}
 \tag{1}
 &K(e_+,e_{i j})=0 \\ 
 \tag{2}
 &K(e_+,f_{1d})=0 \\
 \tag{3}
 &K(e_+,f_{i j})=
         \frac{3\lambda_i \lambda_j}
            {(\lambda_i+\lambda_j)^2(\lambda_1+\lambda_d)}
          &&(i=1 \text{\ or \ } j=d) \\
 \tag{4}
 &K(e_+,f_{kl})=0 &&(1<k<l<d)\\
 \tag{5}
 &K(e_{i j},e_{kl})=0 \\
 \tag{6}
 &K(e_{i j},f_{kl})=0 &&(\{i,j\}\cap\{k,l\}=\emptyset)\\
 \tag{7}
 &K(e_{i k},f_{i j})=
       \frac{3\lambda_i \lambda_j}
            {(\lambda_i+\lambda_j)^2(\lambda_i+\lambda_k)}
       &&(j \neq k)\\
\tag{8}
 &K(e_{i j},f_{i j})=
       \frac{12\lambda_i\lambda_j}
            {(\lambda_i+\lambda_j)^3}\\
\tag{9}
 &K(f_{i j},f_{kl})=0  &&(\{i,j\}\cap\{k,l\}=\emptyset)\\
\tag{10}
 &K(f_{i j},f_{i k})=
        \frac{3\lambda_j\lambda_k}
             {(\lambda_i+\lambda_j)(\lambda_j+\lambda_k)(\lambda_k+\lambda_i)} 
         &&(j \neq k).
\end{align}    
\end{theorem}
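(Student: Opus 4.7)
The plan is to realise $(\N^d_0, g) \cong (\sym, g_W)$ as the Riemannian submersion image of the flat manifold $(GL(d,\R), g_F)$ under the map $\pi\colon A \mapsto A\ten A$, with the right action of $O(d)$ as structure group. Once that picture is set up, the flatness of the total space makes O'Neill's formula collapse to
\[
K(X, Y) \;=\; \frac{3}{4}\cdot\frac{\|[\tilde X, \tilde Y]^{v}\|_F^2}{\|X\|^2 \|Y\|^2 - \langle X, Y\rangle^2},
\]
where $\tilde X, \tilde Y$ denote horizontal lifts and $[\,\cdot\,]^{v}$ is vertical projection; the universal $3/4$ is what accounts for the factor of $3$ appearing in every non-vanishing formula in the theorem.

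First I invoke $O(d)$-equivariance: the map $N(V)\mapsto N(PV\ten P)$ is an isometry sending the theorem's basis at $V=\diag[\lambda_1,\ldots,\lambda_d]$ to its counterpart at $PV\ten P$, so I reduce to $V$ diagonal. Using the metric $g_V(S_1, S_2) = \tr(V S_1 S_2)$ established in earlier sections, I then verify that $\{e_+, e_{ij}, f_{ij}\}$ are unit vectors and that each of the ten pairs appearing in the theorem is orthogonal, so the Gram denominator simplifies to the product of squared norms.

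Next I compute horizontal lifts. With the identification of a tangent vector by its symmetric ``velocity matrix'' $S$, the horizontal vector field lifting $S$ is simply $\tilde S(A) = S\cdot A$: indeed $\ten A(SA) = \ten A\, S\, A$ is symmetric, so $\tilde S(A)$ is horizontal at every $A$, and $d\pi_A(SA) = SV+VS$ projects correctly. On the flat space $GL(d,\R)$ the Lie bracket of these linear-in-$A$ vector fields is just the matrix commutator $[\tilde S,\tilde T]_A = -[S,T]\cdot A$. At the basepoint $A = V^{1/2} = \diag[\sqrt{\lambda_k}]$, decomposing
\[
-[S,T]\cdot V^{1/2} \;=\; R\cdot V^{1/2} + V^{1/2}\cdot Q
\]
into horizontal ($R$ symmetric) and vertical ($Q$ antisymmetric) parts reduces entry-by-entry to a $2\times 2$ linear system whose solution is $Q_{ab} = -2\sqrt{\lambda_a\lambda_b}\,[S,T]_{ab}/(\lambda_a+\lambda_b)$. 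Hence $\|V^{1/2} Q\|_F^2 = \sum_{a,b}\lambda_a Q_{ab}^2$ supplies the numerator in O'Neill's formula.

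The remaining work is a finite case-by-case check. The six vanishing formulas (1), (2), (4), (5), (6), (9) all reduce to $[S,T]=0$: two diagonal matrices always commute (handling (1) and (5)); in (2) one verifies by a short computation that $(E_{11}+E_{dd})(E_{1d}+E_{d1}) = E_{1d}+E_{d1} = (E_{1d}+E_{d1})(E_{11}+E_{dd})$; and in (4), (6), (9) the underlying matrices have disjoint index supports. For the four non-vanishing cases the commutator is supported on a single index pair: as a prototype, in (8) one has $[E_{ii}-E_{jj},\, E_{ij}+E_{ji}] = 2(E_{ij}-E_{ji})$, giving $\|V^{1/2}Q\|_F^2 = 16\lambda_i\lambda_j/(\lambda_i+\lambda_j)$, which together with $\|e_{ij}\|^2\|f_{ij}\|^2 = (\lambda_i+\lambda_j)^2$ yields the stated $12\lambda_i\lambda_j/(\lambda_i+\lambda_j)^3$; cases (3), (7), (10) are entirely analogous commutator computations. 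The main obstacle is not conceptual but organizational: one has to keep track, across the ten cases, of which index pair carries the nonzero vertical contribution and of the correct factors $\lambda_i + \lambda_j$ in the Gram denominator.
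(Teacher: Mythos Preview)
Your argument is correct, but it follows a genuinely different route from the paper's Section~3 proof. The paper computes each sectional curvature \emph{intrinsically} from the Wasserstein distance: it parameterises the geodesic circle $C_r(\theta)=\exp_{N(V)}r(\cos\theta\,A+\sin\theta\,B)$, evaluates $W_2(C_r(\theta_0),C_r(\theta))^2$ via the closed-form Gaussian distance (Theorem~2.4), extracts the speed $W(\theta_0)$, integrates to get the circle length $L(r)$, and reads off $K$ from the expansion $L(r)=2\pi r(1-\tfrac{K}{6}r^2+o(r^2))$. This demands, case by case, explicit control of $\tr(X(\theta_0)^{1/2}X(\theta)X(\theta_0)^{1/2})^{1/2}$; for the $2\times2$ blocks this is handled by Lemma~3.3, but case~(10) involves a $3\times3$ block and the paper resorts to an elaborate detour through the symmetric functions $f,g,h,\varphi,D$ of the eigenvalues and their derivatives.

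Your approach instead realises $(\N^d_0,g)$ as the quotient of flat $(GL(d,\R),\|\cdot\|_F)$ by the right $O(d)$-action and invokes O'Neill, reducing everything to matrix commutators; this is exactly the method the paper attributes to Otto and verifies in Section~4 (and carries out in the companion reference~\cite{takatsu}). What your route buys is uniformity and transparency: all ten cases become short commutator checks, the $3\times3$ obstacle in case~(10) disappears, and the ubiquitous factor~$3$ is explained structurally by O'Neill's $3/4$. What the paper's route buys is self-containment within the Wasserstein framework---no submersion machinery, only the distance formula and a classical Riemannian lemma---at the cost of heavier and less uniform computation. One small slip in your write-up: you say ``$\|e_{ij}\|^2\|f_{ij}\|^2=(\lambda_i+\lambda_j)^2$,'' but in context you are clearly working with the \emph{unnormalised} matrices $E_{ii}-E_{jj}$ and $E_{ij}+E_{ji}$, whose squared norms are each $\lambda_i+\lambda_j$; the normalised $e_{ij},f_{ij}$ are unit vectors. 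The arithmetic is right, only the labelling should be tidied.
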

The formula coincides with a formal 
expressions of sectional curvatures of 
$\ppac$ given by Otto~\cite{Ot}.
This formula shows that the sectional curvature of $\N^d_0$ 
is non-negative and
is written out only
in terms of the eigenvalues of the covariance matrix.

The organization of the paper is as follows. 
We start with a review of 
the $L^2$-Wasserstein geometry and 
the Riemannian length space in Section~2.
Then we prove Theorem~\ref{curv} in Section~3,
using the approximate expression of sectional curvature.
We demonstrate the correspondence 
between our results 
and previously obtained result in Section~4. 

\section*{Acknowledgements}
The author would like to express 
her gratitude to Professor Sumio Yamada, 
for his advice, support and encouragement.
His valuable suggestions were essential to the completion of this paper.
\section{Preliminaries}

\subsection{$L^2$-Wasserstein space}
We first review $L^2$-Wasserstein spaces 
(see \cite{Vi}.) Given a complete metric space $(X,d)$,
we denote by $\px$ 
the set of probability measures with finite second moments on $X$. 
\begin{defn}
For $\mu$, $\nu \in \px$, 
a transport plan $\pi$ between $\mu$ and $\nu$
 is a Borel probability measure on $X \times X$ with marginals $\mu$ and $\nu$,
 that is, 
\[
\pi [ A \times X ]=\mu [A],
\quad \pi [ X \times A ]= \nu [A]
\quad \text{for all Borel sets A in $X$.}
\]
Let $\Pi(\mu,\nu)$ be the set of 
transport plans between $\mu$ and $\nu$,  
then 
the $L^2$-Wasserstein distance between $\mu$ and $\nu$
is defined by
\[
W_2(\mu ,\nu ) ^2 = \inf _{\pi \in \Pi (\mu, \nu)}
               \int _{X \times X }d(x,y)^2 d \pi (x,y).
\]
\end{defn}
The $L^2$-Wasserstein distance actually becomes a distance.
We call the pair ($\px,W_2$) the $L^2$-Wasserstein space over $X$.
A transport plan which achieves the infimum 
is called optimal.
Optimal transport plans on Euclidean spaces are characterized 
by the following properties.
\begin{theorem} [(\cite{bre},\cite{MG})]\label{chara} 
Let $\mu$ and $\nu$ be Borel probability measures on $\R^d$. 
If $\mu$ is absolutely continuous 
with respect to Lebesgue measure, then
\begin{enumerate}
\item
there exists a convex function $\psi$ on $\R^d$  
whose gradient $\nabla \psi$ pushes $\mu$ forward to $\nu$.
\item
 this gradient is uniquely determined ($\mu$-almost everywhere.)

\item
 the joint measure $\pi=( \id \times \nabla \psi)_{\sharp} \mu$ is optimal.

\item
  $\pi$ is the only optimal measure in $\Pi(\mu,\nu)$
 unless $W_2(\mu,\nu) = +\infty$.
\end{enumerate} 
Here the push forward measure of $\mu$ 
through measurable map $f:\R^d \to \R^d$, 
denoted by $f_{\sharp} \mu$, is 
defined by $f_{\sharp} \mu[A]=\mu[ f^{-1}(A)]$ 
for all Borel sets $A$ in $\R^d$. 
\end{theorem}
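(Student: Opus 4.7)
The plan is to prove this via the Brenier--McCann circle of ideas. Since $\mu,\nu$ have finite second moments, the product coupling has finite quadratic cost, so $W_2(\mu,\nu) < \infty$; standard tightness of $\Pi(\mu,\nu)$ together with lower semicontinuity of the quadratic cost produce at least one optimal transport plan $\pi$.

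The first structural step is to show that the support of any optimal $\pi$ is cyclically monotone: for every finite collection $(x_k,y_k) \in \mathrm{spt}(\pi)$ and every permutation $\sigma$, $\sum_k |x_k-y_k|^2 \leq \sum_k |x_k - y_{\sigma(k)}|^2$. This is the standard swapping argument: if inequality ever failed, one could locally reshuffle a small amount of mass in $\pi$ near the offending points and strictly decrease the cost, contradicting optimality. The second step is Rockafellar's theorem, which upgrades this set-level monotonicity to a potential: every cyclically monotone $\Gamma \subset \R^d \times \R^d$ sits inside the subdifferential $\partial\psi$ of a proper lower semicontinuous convex function $\psi$, constructed as a supremum over chains in $\Gamma$. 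The third step invokes the hypothesis $\mu \ll \mathrm{Leb}$: convex functions on $\R^d$ are locally Lipschitz and therefore differentiable off a Lebesgue-negligible set, hence off a $\mu$-negligible set. Thus $\partial\psi(x)=\{\nabla\psi(x)\}$ for $\mu$-almost every $x$, and since $\pi$ has first marginal $\mu$ and is concentrated on $\partial\psi$, it is actually concentrated on the graph of $\nabla\psi$. This forces $\pi = (\id \times \nabla\psi)_{\sharp}\mu$ and, by looking at the second marginal, $(\nabla\psi)_{\sharp}\mu = \nu$, which settles items~(1) and~(3).

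For assertion (4), I would exploit that $\Pi(\mu,\nu)$ is convex and the quadratic cost is linear in the plan, so any convex combination of optimal plans is again optimal. Given two optimal plans $\pi_1,\pi_2$, apply the structure theorem above to $(\pi_1+\pi_2)/2$: both $\pi_i$ are concentrated on the subdifferential of a single convex potential, and the $\mu$-almost-everywhere single-valuedness forces $\pi_1=\pi_2$. For (2), suppose $\nabla\psi_1$ and $\nabla\psi_2$ both push $\mu$ forward to $\nu$; since the graph of a convex gradient is cyclically monotone, a duality argument shows each induced plan $(\id\times\nabla\psi_i)_{\sharp}\mu$ is optimal, and (4) then yields $\nabla\psi_1=\nabla\psi_2$ $\mu$-almost everywhere.

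The step I expect to require most care is the passage from cyclical monotonicity of $\mathrm{spt}(\pi)$ to a single global convex potential together with the measurability of the resulting selection---i.e.\ Rockafellar's construction plus the verification that $\mu$-almost-everywhere differentiability of $\psi$ genuinely collapses the multivalued map $\partial\psi$ to a Borel graph on which $\pi$ is concentrated. Once that selection is in hand, the four conclusions fall out essentially formally from convexity of $\Pi(\mu,\nu)$ and linearity of the cost.
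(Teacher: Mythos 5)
The paper offers no proof of this theorem; it is quoted from Brenier and Gangbo--McCann, so there is nothing internal to compare against. Your outline is the standard argument and is essentially correct \emph{under the extra hypothesis that $\mu$ and $\nu$ have finite second moments}: existence of an optimal plan by tightness and lower semicontinuity, cyclical monotonicity of its support by the swapping argument, Rockafellar's theorem to produce a convex $\psi$ with $\mathrm{spt}(\pi)\subset\partial\psi$, collapse of $\partial\psi$ to $\nabla\psi$ $\mu$-a.e.\ from $\mu\ll\mathrm{Leb}$, the averaging trick for (4), and the reduction of (2) to (4) via the converse implication that a plan carried by the subdifferential of a convex function is optimal.

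There are, however, two gaps against the statement as written. First, the theorem assumes only that $\mu,\nu$ are Borel probability measures, while your opening sentence imports finite second moments. When $W_2(\mu,\nu)=+\infty$ every plan has infinite cost, so your starting point (an optimal plan with cyclically monotone support) is unavailable, yet parts (1) and (2) are still asserted in that case; McCann handles it by a different device --- approximating by compactly supported measures, extracting a weak-$*$ limit of plans carried by cyclically monotone sets, and using stability of cyclical monotonicity under such limits --- rather than by optimization. Since this paper only ever invokes the theorem inside $\mathcal{P}^{\mathrm{ac}}_{2}(\R^d)$, indeed only for Gaussian measures, your restricted version suffices for everything downstream, but it does not prove the statement in the generality claimed. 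Second, a technical point you only half-address: the Rockafellar potential may equal $+\infty$ off a convex set, so ``convex functions on $\R^d$ are locally Lipschitz'' should be ``locally Lipschitz on the interior of $\mathrm{dom}(\psi)$''; one then needs that $\mu$ charges neither the complement of $\mathrm{dom}(\partial\psi)$ nor the Lebesgue-null boundary of the convex set $\mathrm{dom}(\psi)$, so that almost-everywhere differentiability on the interior is enough to define the Borel selection $\nabla\psi$ on a set of full $\mu$-measure.
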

McCann~\cite{Mc} obtained 
the optimal transport plans 
between Gaussian measures on $\R^d$ and 
showed that
the displacement interpolation 
between any two Gaussian measures is also a Gaussian measure.
Namely, $\N^d$ is a geodesically convex subset of
the $L^2$-Wasserstein space.
\begin{lemma} [({\cite[Example~1.7]{Mc}})] \label{push}
For $X \in \sym$, 
we define a symmetric positive definite matrix 
$X^{1/2}$  so that $X^{1/2}\cdot X^{1/2}=X$.
For $N(m,V)$ and $N(n,U)$,
define a symmetric positive definite matrix
\[
W=(w_{i j})=U^{\frac12}(U^{\frac12}V U^{\frac12})^{-\frac12}U^{\frac12}
\]
and the related function
\[
\W (x) =\frac{1}{2} \langle x-m,W(x-m)\rangle 
+ \langle x,n \rangle.
\]
We denote the gradient of $\W$ by $\nabla \W$.
Then, $(\id,\nabla\W)_{\sharp}N(m,V)$ is 
the optimal transport between 
$N(m,V)$ and $N(n,U)$.
If we moreover set 
\[
 l(t)=(1-t)m+tn, \quad 
 W(t)=((1-t)E+t W)V((1-t)E+t W),
\]
then $\{N(l(t),W(t))\}_{t \in[0,1]}$ is a geodesic 
from $N(m,V)$ to $N(n,U)$. 
\end{lemma}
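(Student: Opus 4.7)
The plan is to apply Brenier's theorem (Theorem~\ref{chara}) directly: since Gaussian measures are absolutely continuous, it suffices to exhibit a convex function whose gradient pushes $N(m,V)$ forward to $N(n,U)$, and then uniqueness will identify this gradient with the optimal map. The natural candidate is the affine map $\nabla\W(x) = W(x-m)+n$, whose potential $\W$ is quadratic with Hessian $W$.

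First I would verify that $W = U^{1/2}(U^{1/2}VU^{1/2})^{-1/2}U^{1/2}$ is itself symmetric positive definite, which is immediate because each factor is. This makes $\W$ convex, so it is eligible to be the Brenier potential. Next, I would push $N(m,V)$ forward by the affine map $x \mapsto W(x-m)+n$: this yields a Gaussian with mean $n$ and covariance $WVW\ten = WVW$ (since $W\ten = W$). The heart of the argument is the algebraic identity $WVW = U$, which I would check by setting $A := U^{1/2}VU^{1/2}$ so that $W = U^{1/2}A^{-1/2}U^{1/2}$ and computing
\[
  WVW = U^{1/2}A^{-1/2}\bigl(U^{1/2}VU^{1/2}\bigr)A^{-1/2}U^{1/2}
      = U^{1/2}A^{-1/2}\cdot A\cdot A^{-1/2}U^{1/2} = U,
\]
using that $A^{-1/2}$ commutes with $A$. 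By Brenier's theorem, $(\id\times\nabla\W)_\sharp N(m,V)$ is then the (unique) optimal transport plan between $N(m,V)$ and $N(n,U)$.

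For the displacement interpolation, I would use the standard fact for the $L^2$-Wasserstein space that the geodesic between absolutely continuous measures is $t\mapsto ((1-t)\id + t\nabla\W)_\sharp N(m,V)$. Writing
\[
  T_t(x) = (1-t)x + t\nabla\W(x) = \bigl((1-t)E + tW\bigr)(x-m) + (1-t)m + tn,
\]
one sees that $T_t$ is again affine; pushing $N(m,V)$ forward gives a Gaussian with mean $l(t) = (1-t)m+tn$ and covariance $((1-t)E+tW)V((1-t)E+tW) = W(t)$, as claimed.

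The only substantive obstacle is the computation $WVW=U$; this is where the somewhat intricate definition of $W$ is tailored precisely so that the middle factor $U^{1/2}VU^{1/2}$ cancels with its square-root inverses. Everything else (convexity of $\W$, affine push-forwards of Gaussians, displacement interpolation) is routine.
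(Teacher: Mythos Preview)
The paper does not give its own proof of this lemma; it is stated with a citation to McCann~\cite[Example~1.7]{Mc} and used as a black box. Your argument is correct and is exactly the standard proof: check that $W$ is symmetric positive definite so that $\W$ is convex, verify the algebraic identity $WVW=U$ so that $(\nabla\W)_\sharp N(m,V)=N(n,U)$, invoke Theorem~\ref{chara} for optimality, and then push forward along the affine maps $T_t=(1-t)\id+t\nabla\W$ to obtain the displacement interpolation. Nothing is missing; the one step you label ``standard'' (that $t\mapsto (T_t)_\sharp\mu$ is the Wasserstein geodesic) is indeed the usual McCann interpolation, and in this affine setting it can also be checked directly since $(1-t)E+tW$ is positive definite for $t\in[0,1]$, so each $T_t$ is itself a Brenier map.
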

Lemma~\ref{push} enables us to 
obtain the $L^2$-Wasserstein distance on $\N^d_0$.
\begin{theorem}
[(\cite{DL}, \cite{GS}, \cite{KS}, \cite{OP})] \label{kyori}
For $N(m,V)$ and $N(n,U)$, we get
\[
 W_2(N(m,V),N(n,U))^2
=|m-n|^2 +\tr V + \tr U-
 2\tr\left({U^{\frac12}V U^{\frac12}}\right)^{\frac12}.
\]
\end{theorem}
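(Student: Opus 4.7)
The plan is to invoke Lemma~\ref{push}, which supplies an explicit optimal transport plan between $N(m,V)$ and $N(n,U)$, and then to reduce the squared $L^2$-Wasserstein cost to a Gaussian second-moment computation. Since Lemma~\ref{push} identifies $(\id,\nabla\W)_\sharp N(m,V)$ as an optimal plan with $\nabla\W(x)=W(x-m)+n$, the definition of $W_2$ gives
\[
W_2(N(m,V),N(n,U))^2=\int_{\R^d}|x-\nabla\W(x)|^2\,dN(m,V)(x).
\]
Changing variables by $y=x-m$, so that $N(m,V)$ becomes $N(0,V)$, turns the integrand into $|(W-I)y+(n-m)|^2$; the cross term vanishes because $N(0,V)$ has mean zero, leaving
\[
W_2(N(m,V),N(n,U))^2=|m-n|^2+\int_{\R^d}\langle (W-I)y,(W-I)y\rangle\,dN(0,V)(y).
\]

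Next I use the standard Gaussian identity $\int \langle My,My\rangle\,dN(0,V)(y)=\tr(M\ten M V)$, which follows from $\int yy\ten\,dN(0,V)(y)=V$. Applied to the symmetric matrix $M=W-I$ and expanded, this reduces the theorem to evaluating the two traces $\tr(W^2 V)$ and $\tr(WV)$.

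The crux is the algebraic identity $WVW=U$. Writing $S:=(U^{1/2}VU^{1/2})^{1/2}$, the definition $W=U^{1/2}S^{-1}U^{1/2}$ collapses the middle factor $U^{1/2}VU^{1/2}=S^2$ against the two copies of $S^{-1}$, giving $WVW=U^{1/2}S^{-1}S^2S^{-1}U^{1/2}=U$. Given this, cyclicity of the trace produces $\tr(W^2V)=\tr(WVW)=\tr U$ and $\tr(WV)=\tr(S^{-1}\cdot S^2)=\tr S=\tr((U^{1/2}VU^{1/2})^{1/2})$, and assembling the pieces yields the claimed formula. The only non-mechanical step is this identity $WVW=U$, which is precisely the point of the specific form of $W$ prescribed by Lemma~\ref{push}; everything else is routine Gaussian algebra and trace manipulation.
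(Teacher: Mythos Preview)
Your proof is correct. The paper itself does not supply a proof of Theorem~\ref{kyori}; it merely cites \cite{DL}, \cite{GS}, \cite{KS}, \cite{OP} and remarks that ``Lemma~\ref{push} enables us to obtain the $L^2$-Wasserstein distance on $\N^d_0$.'' Your derivation is precisely the computation that remark points to: feed the explicit optimal map $\nabla\W(x)=W(x-m)+n$ from Lemma~\ref{push} into the Wasserstein cost, separate the mean shift from the centered Gaussian quadratic form, and use $WVW=U$ together with cyclicity of the trace to identify $\tr(W^2V)=\tr U$ and $\tr(WV)=\tr\bigl((U^{1/2}VU^{1/2})^{1/2}\bigr)$. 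So your approach matches what the paper indicates, and there are no gaps.
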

We call $\W$ above a (unique) linear transform 
between $N(m,V)$ and $N(n,U)$.
Let $O(d)$ be the set of orthogonal 
matrices of size $d$.
For $P \in O(d)$,
we denote by $\N^d(P)$ 
the subset of $\N^d$ 
whose covariance matrices are diagonalized by $P$.
\begin{cor} \label{flat}
For any $P \in O(d)$,
$(\N^d(P),W_2)$ is isometric to $\R^d \times (\R_{>0})^d$.
\end{cor}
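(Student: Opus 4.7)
The plan is to exhibit an explicit isometry $\phi\colon \N^d(P)\to\R^d\times(\R_{>0})^d$. Any $V\in\sym$ diagonalized by $P$ admits a unique representation $V=P\diag[\lambda_1,\ldots,\lambda_d]\ten P$ with $\lambda_i>0$, since fixing $P$ recovers the diagonal entries as $\ten P V P$. Hence
\[
 \phi(N(m,V))=(m,\sqrt{\lambda_1},\ldots,\sqrt{\lambda_d})
\]
is a well-defined bijection onto $\R^d\times(\R_{>0})^d$: surjectivity is immediate because any positive $d$-tuple yields a valid covariance matrix of the required form.

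The computational core is to apply Theorem~\ref{kyori} to $N(m,V),N(n,U)\in\N^d(P)$ with $V=P\diag[\lambda_1,\ldots,\lambda_d]\ten P$ and $U=P\diag[\mu_1,\ldots,\mu_d]\ten P$. Because they share the common diagonalizer $P$, the matrices $V$ and $U$ commute, so matrix functional calculus reduces to coordinate-wise operations on the eigenvalues. Using $P\ten P=\ten P P=\id$, one gets
\[
 U^{1/2}VU^{1/2}=P\diag[\lambda_1\mu_1,\ldots,\lambda_d\mu_d]\ten P,\qquad
 (U^{1/2}VU^{1/2})^{1/2}=P\diag[\sqrt{\lambda_1\mu_1},\ldots,\sqrt{\lambda_d\mu_d}]\ten P.
\]
Substituting the resulting trace $\sum_i\sqrt{\lambda_i\mu_i}$ into Theorem~\ref{kyori} gives
\[
 W_2(N(m,V),N(n,U))^2=|m-n|^2+\sum_{i=1}^d(\sqrt{\lambda_i}-\sqrt{\mu_i})^2,
\]
which is exactly the squared Euclidean distance between $\phi(N(m,V))$ and $\phi(N(n,U))$ in $\R^d\times(\R_{>0})^d$.

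There is no substantial obstacle: the corollary is essentially a direct reading of Theorem~\ref{kyori} once one notes that commuting symmetric positive-definite matrices are simultaneously diagonalized by $P$, so all matrix square roots collapse to scalar square roots of eigenvalues. The only point requiring care is that the parameterization $V\mapsto(\lambda_1,\ldots,\lambda_d)$ is unambiguous precisely because $P$ is fixed as part of the data defining $\N^d(P)$, which eliminates the permutation ambiguity in the spectral decomposition.
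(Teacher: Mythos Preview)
Your proof is correct and follows exactly the same route as the paper: apply Theorem~\ref{kyori} to two Gaussians whose covariances are simultaneously diagonalized by $P$, reduce the matrix square root to scalar square roots of eigenvalues, and read off the Euclidean distance. In fact your write-up is more accurate than the paper's, which contains a slip: the paper states the distance as $\sum_i(\lambda_i-\sigma_i)^2$ and the isometry as $N(m,V)\mapsto(m,\lambda_1,\ldots,\lambda_d)$, whereas the correct expressions (as you have them) involve $\sqrt{\lambda_i}$.
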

\begin{proof}
For $N(m,V),N(n,U) \in \N^d(P)$, 
there uniquely exist 
$\{\lambda_i\}_{i=1}^d,\{\sigma_i\}_{i=1}^d \subset \R_{>0}$ 
such that 
\[
 V=P \diag [\lambda_1,\ldots,\lambda_d] \ten{P}, \quad
 U=P \diag [\sigma_1,\ldots,\sigma_d] \ten P.
\] 
By Theorem~\ref{kyori}, we have 
\[
  W_2(N(m,V),N(n,U))^2
 =|m-n|^2 +\sum_{i=1}^d (\lambda_i-\sigma_i)^2.
\]
Therefore a map identifying 
$N(m,V)$ with $(m,(\lambda_1,\ldots,\lambda_d))$ 
is an isometry from 
$\N^d(P)$ to $\R^d \times (\R_{>0})^d$. 
\end{proof}

\begin{rem}
In the $L^2$-Wasserstein geometry,
$\N^1$ is isometric to a Euclidean upper half plane.
While in the Fisher geometry,
$\N^1$ is isometric to a hyperbolic plane 
with constant sectional curvature $-1/2$~{\upshape (see~\cite{amari}.)}
\end{rem}

\subsection{Riemannian length space}
Next, we give $\N^d$ an $L^2$-Wasserstein metric. 
See \cite{CMV} for more detail.
\begin{defn}\label{rls}
Let $\langle \cdot,\cdot \rangle _y$ and $|\cdot|_y$ 
denote an inner product and a norm on a vector space $\mathcal{H}_y$.
A subset $M$ of a length space $(N, \mathrm{dist})$ is called Riemannian
if each $x \in M$ is associated with a map $\eep_x :\mathcal{H}_x \to N$
defined on some inner product space $\mathcal{H}_x$ which gives a surjection
from a star-shaped subset  $\mathcal{K}_x \subset \mathcal{H}_x$ onto $M$
such that the curve $x_s= \eep_x (s p)$ defines an (affinely parameterized)
minimizing geodesic $[0,1] \ni s \mapsto x_s$ linking $x=x_0$ to $y=x_1$
for each $p \in \mathcal{K}_x$.
We moreover assume that there exists $q \in \mathcal{K}_y$
such that $x_s=\eep_y(1-s)q$ and
\[
\mathrm{dist}(\eep_x u,\eep_y v)^2 \leq
\mathrm{dist}(x,y)^2 -2 \langle v, q \rangle _y -2 \langle u, p \rangle _x
+ o (\sqrt{|u|_x^2+|v|_y^2}),
\]
for all $u \in \mathcal{H}_x$ and $v \in \mathcal{H}_y$
as $|u|_x +|v|_y \to 0$.
Dependence of these structures on the base points $x$ and $y$
may be suppressed when it can be inferred from the context.
\end{defn}
%
It was shown in~\cite[Proposition 4.1]{CMV} 
that $\ppac$ 
forms a Riemannian length space with the following methods.

Take $(N,\mathrm{dist})=(\pr,W_2)$ as 
our complete length space and the subset $M=\ppac$.
Fix $\rho \in M$.
Let $\mathrm{spt}(\rho )$ denote smallest closed subset of $\R^d$ 
containing the full mass of $\rho$, 
and let $\Omega _\rho \subset \R^d$ 
denote the interior of the convex hull of $\mathrm{spt}(\rho)$.
We take $\mathcal{H}_\rho = 
\mathcal{H}^{1,2}(\R^d , d \rho) 
\subset C^{0,1}_{\mathrm{loc}}(\Omega _\rho)$ 
to consist of those locally Lipschitz 
continuous functions on $\Omega _\rho$ 
whose first derivatives lie in the weighted space 
$L^2(\R ^d , d \rho;\R ^d)$, 
modulo equivalence with respect to semi-norm
\[
\langle \psi,\psi \rangle_\rho
=\int _{\Omega _\rho} |\nabla \psi (x)| ^2 d \rho(x).
\]
And the exponential map is defined by
\[
\eep _\rho s\psi =[\id + s \nabla \psi] _{\sharp} \rho .
\]
 
Furthermore, they remarked if 
$M^{\prime} \subset M$ is geodesically convex, 
meaning any geodesic lies in $M^{\prime}$ 
whenever its endpoints do, then $M^{\prime}$ is itself a
Riemannian length space with the same tangent space 
and the exponential map as those of $M$, 
but the star-shaped subset is given by
\[
\mathcal{K}_x^{\prime}
=\{ p \in \mathcal{K}_x \bigm | \exp_x p \in M^ {\prime} \}.
\] 
Lemma~\ref{push} implies 
that $\N^d_0$ is a geodesically convex subset of $\ppac$.
Therefore $\N^d_0$ should also be a Riemannian length space,
especially a Riemannian manifold.
If $\W$ is the linear transform between $N(V)$ and $N(U)$,
then $\eep _{N(V)} s \psi =N(U)$ 
if and only if 
$\nabla \psi$ is same as 
$\nabla (\W- |\cdot|^2/2)$.
Identifying linear transforms as their coefficients,
we treat the tangent vector space at each point as $\Sym$. 
Namely,
we can identify the tangent space at $N(V)$ to $\N^d_0$ 
with $\Sym$ based on the idea   
\[
 \exp_{N(V)} t X =N(U(t)), 
\text{where }
U(t)=((1-t)E+t X)V((1-t)E+t X).
\]
Moreover, the inner product becomes 
a Riemannian metric $g$ on $\N^d_0$, 
whose Riemannian distance 
coincides with the $L^2$-Wasserstein distance.
Its expression of $g$ is given by 
\[
 g_{N(V)}(t X,t X)
=\int _{\R^d} |t X x| ^2 d N(V)(x)
=t^2\tr X V X.
\]

\begin{theorem} \label{mani}
Let $\N^d_0$ be the space of Gaussian measures 
with mean $0$ over $\R^d$. 
Then, $\N^d_0$ becomes a $C ^\infty$-Riemannian manifold 
of dimension $d(d+1)/2$ 
and there exists the $L^2$-Wasserstein metric $g$.
If we identify the tangent space at $N(V)$ to $\N^d_0$ 
with $\Sym$ by  
\[
 \exp_{N(V)} t X =N(U(t)), 
\text{where }
U(t)=((1-t)E+t X)V((1-t)E+t X),
\]
then it is explicitly given by 
\[
 g_{N(V)}(X,Y)= \tr X V Y.
\]
\end{theorem}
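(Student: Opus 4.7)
The plan is to use McCann's identification $\N^d_0 \cong \sym$, realizing $\N^d_0$ as an open subset of the vector space $\Sym$ of dimension $d(d+1)/2$; this at once provides the claimed smooth structure. The substantive content is then to verify that the Carrillo--McCann--Villani Riemannian length space inner product on $\ppac$, restricted to the geodesically convex submanifold $\N^d_0$ (convex by Lemma~\ref{push}), descends to the explicit Riemannian metric $g_{N(V)}(X,Y)=\tr XVY$ on $T_{N(V)}\N^d_0\cong\Sym$.

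First I would justify the tangent-space identification. By Lemma~\ref{push}, the optimal transport from $N(V)$ to any other centered Gaussian is the gradient of a quadratic $\W(x)=\tfrac12\langle x,Wx\rangle$ with $W$ symmetric, and the Gaussian-valued geodesic is exactly the one produced by the affine interpolation $(1-t)E+tW$. Consequently, in the CMV picture, every tangent direction $\psi$ at $N(V)$ whose geodesic stays in $\N^d_0$ has a gradient of the form $\nabla\psi(x)=Xx$ for some $X\in\Sym$, and conversely every $X\in\Sym$ generates such a Gaussian-valued geodesic segment via the push-forward formula. The CMV equivalence (which identifies tangent vectors having the same gradient $N(V)$-a.e.) is trivial on this class, since a linear map is determined by its values on any set of positive Lebesgue measure. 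This yields the canonical identification $T_{N(V)}\N^d_0\cong\Sym$ and the stated exponential-map formula.

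Next I would compute the metric. For $X,Y\in\Sym$ corresponding to gradients $x\mapsto Xx$ and $x\mapsto Yx$, the CMV inner product combined with the Gaussian second-moment identity $\int xx^{\top}\,dN(V)(x)=V$ and cyclicity of the trace yields
\[
g_{N(V)}(X,Y)=\int_{\R^d}\langle Xx,Yx\rangle\,dN(V)(x)=\int_{\R^d} x^{\top}XY x\,dN(V)(x)=\tr(XYV)=\tr XVY,
\]
where I also use symmetry of $X$ to replace $X^{\top}Y$ by $XY$. This expression is polynomial in the entries of $V,X,Y$, hence $C^\infty$ on $\sym$, and positive-definite because $\tr XVX=\|V^{1/2}X\|_{\mathrm{HS}}^{2}$ with $V^{1/2}$ invertible on $\sym$. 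That the induced Riemannian distance on $\N^d_0$ coincides with $W_2$ is then inherited from the general CMV result applied to geodesically convex Riemannian length subspaces.

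The main conceptual hurdle is that the CMV tangent space is originally defined as a quotient of $H^{1,2}(\R^d,dN(V))$ modulo the Dirichlet-type semi-norm, not as any space of matrices; one needs both that every $X\in\Sym$ integrates into a Gaussian-valued geodesic (direct from the push-forward formula applied to the linear map $E+tX$, valid for $t$ small enough that this is positive definite) and that every CMV geodesic remaining in $\N^d_0$ arises this way, which is precisely the content of Lemma~\ref{push}. Once this matching is established, all remaining computations are routine Gaussian moment calculations.
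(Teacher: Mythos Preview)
Your proposal is correct and follows essentially the same approach as the paper: the paper also derives Theorem~\ref{mani} from the Carrillo--McCann--Villani Riemannian length space structure on $\ppac$, uses Lemma~\ref{push} to identify tangent directions at $N(V)$ with symmetric matrices via the linear transform $\W$, and computes the metric by the Gaussian integral $\int_{\R^d}|Xx|^2\,dN(V)(x)=\tr XVX$. Your write-up is somewhat more explicit about smoothness and positive-definiteness (via $\tr XVX=\|V^{1/2}X\|_{\mathrm{HS}}^2$), but the underlying argument is the same.
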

Theorem~\ref{mani} shows that 
$\{e_+, e_{i j},f_{i j}\}_{1\leq i<j \leq d}$ 
is a set of 
normal vectors.
\section{Proof of Theorem \ref{curv}}
In order to calculate the sectional curvatures 
of $(\N^d_0,g)$, where $g$ is the $L^2$-Wasserstein metric,
we need some lemmas.
%
\begin{lemma} [({\cite[Theorem 3.68]{GHL}})] \label{taylor}
Let $(M,g)$ be a Riemannian manifold.
For any $p \in M$, 
$\{u,v\}$ is an orthonormal basis of a $2$-plane in the tangent space at $p$.
Let
\[
C_r(\theta  )=
\exp _p r (u \cos \theta  + v \sin \theta  ),
\]
and $L(r)$ be the length of the curve $C_r$. 
Then the function $L(r)$ admits an asymptotic expansion
\[
L(r)= 2 \pi r 
\left(
1-\frac{K(u,v)}{6}r^2 +o(r^2)
\right) , \quad as \ \  r \searrow 0,
\]
where $K(u,v)$ is the sectional curvature of the $2$-plane 
spanned by $\{u,v\}$.
\end{lemma}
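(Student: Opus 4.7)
The plan is to pass to Riemannian normal coordinates centered at $p$ and apply the classical Taylor expansion of the metric tensor in such coordinates, namely
\[
g_{ij}(x) = \delta_{ij} - \tfrac{1}{3} R_{ikjl}(p)\, x^k x^l + O(|x|^3),
\]
where $R_{ikjl}$ are the components of the Riemann curvature tensor at $p$ in the frame used to define the coordinates. This expansion is exactly the mechanism by which sectional curvature controls the geometry of small geodesic spheres, so the whole proof reduces to a plane computation plus one integration in $\theta$.

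First I would choose an orthonormal basis $\{u,v,e_3,\ldots,e_n\}$ of $T_p M$ so that in the resulting normal coordinates the $2$-plane spanned by $\{u,v\}$ corresponds to the $(x^1,x^2)$-plane. Then $C_r(\theta)=\exp_p r(u\cos\theta+v\sin\theta)$ is represented in coordinates by $x(\theta)=(r\co,r\si,0,\ldots,0)$ with Euclidean velocity $x'(\theta)=(-r\si,r\co,0,\ldots,0)$. The vectors $x,x'$ have Euclidean norm $r$, are Euclidean-orthogonal, and both lie in the distinguished $2$-plane, so $\{x/r,\,x'/r\}$ is itself an orthonormal basis of the plane spanned by $\{u,v\}$.

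Next I would compute the Riemannian speed using the metric expansion. Contracting against $(x')^i(x')^j$ produces
\[
|x'(\theta)|_g^2 = r^2 - \tfrac{1}{3} R\bigl(x,x',x,x'\bigr) + O(r^5).
\]
Because the sectional curvature of a $2$-plane is invariant under orthogonal changes of basis within the plane, $R(x,x',x,x')=r^4 K(u,v)$ independently of $\theta$. Taking square roots gives the uniform estimate
\[
|x'(\theta)|_g = r\left(1 - \tfrac{1}{6} K(u,v)\, r^2 + o(r^2)\right),
\]
and integrating over $\theta\in[0,2\pi]$ yields $L(r)=2\pi r\bigl(1-K(u,v)\,r^2/6+o(r^2)\bigr)$, as claimed.

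The only substantive obstacle is the normal-coordinate metric expansion itself, which the paper simply cites from \cite{GHL}. If one wanted a self-contained derivation, it would come from the standard Jacobi field computation along radial geodesics: a Jacobi field $J$ along $\gamma(r)=\exp_p(ru)$ with $J(0)=0$ and $J'(0)=w$ admits the expansion $J(r)=rw-\tfrac{1}{6}r^3 R(u,w)u + O(r^4)$, and differentiating the exponential map with respect to its argument then transports the initial inner product on $T_p M$ into the stated coordinate expansion of $g_{ij}$. Everything after that is the elementary plane computation described above.
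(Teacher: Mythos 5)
Your proof is correct. Note that the paper itself gives no proof of this lemma --- it is imported verbatim as Theorem~3.68 of Gallot--Hulin--Lafontaine --- so there is nothing in the text to compare against; but your argument is essentially the standard textbook one: the normal-coordinate expansion $g_{ij}(x)=\delta_{ij}-\tfrac13 R_{ikjl}x^kx^l+O(|x|^3)$ is equivalent, via differentiation of $\exp_p$, to the Jacobi-field expansion $J(r)=rw-\tfrac16 r^3R(u,w)u+O(r^4)$ that underlies the cited reference, and the remaining steps (the curve is the Euclidean circle $(r\cos\theta,r\sin\theta,0,\dots,0)$ in these coordinates, $R(x,x',x,x')=r^4K(u,v)$ by basis-invariance of sectional curvature, the error after contraction is $O(r^5)$ uniformly in $\theta$, and the square root and integration) are all carried out correctly.
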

\begin{lemma}\label{linear}
For $A,B \in 
    \left\{e_+, e_{i j}, f_{i j} \right\}_{1\leq i<j \leq d}$,
 $0 \leq r \ll  1$ and $\theta  \in [0,2\pi]$,
\[
 C_r(\theta )=\exp_{N(V)} r(\cos \theta \cdot A +\sin \theta  \cdot B)
\] 
is a Gaussian measure whose 
covariance matrix 
$X=X(r,\theta )
  =(x_{\alpha\beta})
$
 is given by 
\begin{equation}\label{cov}
 X=[E+ r(\cos \theta \cdot A +\sin \theta  \cdot B)]
     \cdot V \cdot
     [E+r(\cos \theta \cdot A +\sin \theta  \cdot B)],
\end{equation}
where $E$ is the identity matrix.  
\end{lemma}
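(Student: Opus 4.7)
The plan is to unpack the definition of the exponential map on $\N^d_0$ recalled in Section~2.2 and reduce the claim to the elementary behaviour of Gaussians under linear pushforward.

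First I would observe that each of the prescribed tangent vectors $e_+, e_{ij}, f_{ij}$ is a symmetric matrix, since each is built from the manifestly symmetric combinations $E_{11}+E_{dd}$, $E_{ii}-E_{jj}$, $E_{ij}+E_{ji}$. Hence the direction $Y:=\cos\theta\cdot A+\sin\theta\cdot B$ is itself symmetric. Under the identification from Section~2.2, this $Y$ represents the potential $\psi_Y(x):=\tfrac12\langle x,Yx\rangle\in\mathcal{H}^{1,2}(\R^d,dN(V))$, whose gradient is the linear vector field $\nabla\psi_Y(x)=Yx$. Applying the definition $\eep_\rho s\psi=[\id+s\nabla\psi]_\sharp\rho$ with $\rho=N(V)$ and $s\psi=r\psi_Y$ gives
\[
C_r(\theta)=[\id+rY]_\sharp N(V).
\]

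The next step is to invoke the standard transformation rule for centred Gaussians under linear maps: for any invertible linear $L\colon\R^d\to\R^d$ one has $L_\sharp N(0,V)=N(0,LV\ten L)$. Taking $L=E+rY$, which is symmetric and positive definite for all sufficiently small $r\ge 0$ (so that $\ten L=L$ and $L$ is invertible), this yields
\[
C_r(\theta)=N\bigl((E+rY)V(E+rY)\bigr),
\]
which is precisely the formula~\eqref{cov}.

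I do not expect any real obstacle. The argument is a direct unpacking of the Riemannian length space structure on $\N^d_0$ combined with the elementary covariance transformation rule for Gaussians. The only point that requires a moment's care is checking that $E+rY$ stays invertible so that the pushforward really produces a nondegenerate element of $\N^d_0$; this is automatic for $r$ small, since $E$ is positive definite and positive definiteness is an open condition on $\Sym$.
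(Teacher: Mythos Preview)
Your argument is correct and is essentially the paper's approach spelled out in full: the paper's proof is the single line ``It is clear by Lemma~\ref{push},'' which encodes precisely the pushforward computation you carry out (linear gradient of a quadratic potential, then the covariance transformation rule $L_\sharp N(0,V)=N(0,LV{}^{\mathsf T}L)$ for $L=E+rY$). There is nothing missing.
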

\begin{proof}
It is clear by Lemma~\ref{push}.
\end{proof}
\begin{flushleft}
\textbf{Proof of (1) and (5)}  
\end{flushleft}
If we choose $A=e_+, B=e_{i j}$ or 
$A=e_{i j}, B=e_{k l}$
in \eqref{cov}, 
then $N(X)$ belongs to $\N^d_0(P)$.
Since  $\N^d_0(P)$ is a flat manifold 
by Corollary~\ref{flat},
the curvatures vanish.

A strategy for proving the remaining case is as follows. 
We first calculate
\[
 W(\theta _0,\theta )=W_2(C_r(\theta _0),C_r(\theta ))^2
\text{\quad and \quad}
 W(\theta _0)=
 \lim_{\theta \rightarrow \theta_0}
 \frac{ W(\theta _0,\theta )}{\theta ^2}.
\]
Then we get
\[
 L(r)=\int_{0}^{2\pi} W(\theta )^{\frac12} d \theta .
\]
Finally we use Lemma~\ref{taylor} 
to obtain the expression of the 
sectional curvatures.
Without loss of generality, we may assume $P=E$.
That is to say, 
\[
e_+=\frac{E_{11}+E_{dd}}{\ld},\ 
e_{i j}=\frac{E_{ii}-E_{j j}}{\lij},\  
f_{i j}=\frac{E_{i j}+E_{j i}}{\lij},\ 
 \text{and }
 V=\diag[\lambda_1,\ldots,\lambda_d],
\]
because we have
\[
   W(\theta _0,\theta )
  =\tr X(r,\theta _0) + \tr X(r,\theta ) -
   2 \tr \left(X(r,\theta _0)^{\frac12}
               X(r,\theta )
               X(r,\theta _0)^{\frac12}\right)^{\frac12}
\]
and the value is invariant under taking conjugation with 
any orthogonal matrix $P$.

For a general symmetric positive definite matrix $X$, 
it is hard to get a concrete expression of $X^{1/2}$.
But if the matrix is size of $2\times 2$, 
the next lemma enables us 
to obtain the value of the trace and the determinant of $X^{1/2}$.
\begin{lemma} \label{square}
Let $M \in \SSym$, then
\[
(\tr M)^2
=\tr M^2 + 2 \det M.
\]
\end{lemma}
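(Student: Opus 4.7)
The statement is an algebraic identity for $2\times 2$ symmetric matrices, so my plan is a direct verification; there is no real obstacle, only a choice between three essentially equivalent routes.

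The most self-contained approach is coordinate-based. I would write a general element of $\SSym$ as
\[
M=\begin{pmatrix} a & b \\ b & c \end{pmatrix},
\]
so that $\tr M = a+c$, $\det M = ac-b^{2}$, and a direct multiplication gives
\[
M^{2}=\begin{pmatrix} a^{2}+b^{2} & b(a+c)\\ b(a+c) & b^{2}+c^{2}\end{pmatrix},
\quad \tr M^{2}=a^{2}+2b^{2}+c^{2}.
\]
Adding $2\det M = 2ac-2b^{2}$ produces $a^{2}+2ac+c^{2}=(a+c)^{2}=(\tr M)^{2}$, which is the claim.

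A shorter alternative is to invoke Cayley--Hamilton in dimension $2$: for any $2\times 2$ matrix $M$ one has $M^{2}-(\tr M)\,M+(\det M)\,E=0$, where $E$ is the identity. Taking the trace of both sides yields $\tr M^{2}-(\tr M)^{2}+2\det M=0$, which rearranges to the desired identity. Note that symmetry of $M$ is not actually needed; the lemma holds for arbitrary $2\times 2$ matrices.

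A third, equally clean route uses the spectral theorem: since $M\in\SSym$, it has real eigenvalues $\mu_{1},\mu_{2}$, and then $\tr M=\mu_{1}+\mu_{2}$, $\tr M^{2}=\mu_{1}^{2}+\mu_{2}^{2}$, $\det M=\mu_{1}\mu_{2}$, and the identity reduces to the elementary $(\mu_{1}+\mu_{2})^{2}=\mu_{1}^{2}+\mu_{2}^{2}+2\mu_{1}\mu_{2}$. Given that the lemma will be applied in the proof of Theorem~\ref{curv} to extract $\tr X^{1/2}$ and $\det X^{1/2}$ from the characteristic data of a $2\times 2$ block, the Cayley--Hamilton derivation is the cleanest to include; I would present it in one or two lines.
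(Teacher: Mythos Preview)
Your proposal is correct; the paper proves the lemma by exactly your first route, writing $M=\begin{pmatrix}a&c\\c&b\end{pmatrix}$ and expanding $\tr M^{2}+2\det M=(a+b)^{2}$ directly. Your Cayley--Hamilton and eigenvalue alternatives are equally valid shortcuts, but for a one-line identity this is not a genuinely different approach.
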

\begin{proof}
Setting 
\[
M=
\begin{pmatrix}
a & c \\
c & b \\
\end{pmatrix}, 
\text{ we obtain }
M^2=
\begin{pmatrix}
a^2 +c^2 &c( a+b ) \\
c( a+b )  &b^2+ c^2 \\
\end{pmatrix}.
\]
Therefore, we get
\[
\tr M^2 +2\det M
=a^2+b^2+2c^2-2(a b -c^2)
=(a+b)^2
=(\tr M)^2.
\]
\end{proof}

We set 
\[
 c_{i j}(r,\theta)=\frac{r \cos \theta}{\lij}, \  
 s_{i j}(r,\theta)=\frac{r \sin \theta}{\lij} 
\]
for $1\leq i,j \leq d$, $\theta \in [0,2\pi]$ 
and sufficiently small $r\geq0$. 
\begin{flushleft}
\textbf{Proof of (2) and (8)}  
\end{flushleft}
For (2),  
we take  
$A=f_{1 d}$, $B=e_{+}$ and $I=\{1,d\}$, 
whereas,  for (8),  
take $A=f_{i j}$, $B=e_{i j}$ and $I=\{i,j\}$.
Then we notice that for any $\alpha,\beta \notin I$,
$(\alpha,\beta)$-components of $X$ 
are independent of the variables $r$ and $\theta$.
If we set
\begin{align*}
 &\widetilde{X}(\theta)=
  \begin{pmatrix}
     x_{\alpha\alpha} &x_{\alpha \beta} \\
     x_{\beta \alpha} &x_{\beta \beta}
  \end{pmatrix}
\end{align*}
for $\{\alpha,\beta\}=I$, we obtain 
\begin{align}\label{saisyo}
 W(\theta_0,\theta)
& =\tr \widetilde{X}(\theta_0)+\tr \widetilde{X}(\theta)
   -2\tr\left( \widetilde{X}(\theta_0)^{\frac{1}{2}}
             \widetilde{X}(\theta)
             \widetilde{X}(\theta_0)^{\frac{1}{2}} \right)^{\frac12}.
\end{align}
For (2),
using Lemma~\ref{square},  
we conclude 
\begin{align*}
 W(\theta_0, \theta ) 
=4r^2\sin^2(\theta-\theta_0)\  
\text{ and } 
 \lim_{\theta\rightarrow \theta_0}
 \frac{W(\theta_0,\theta)}{(\theta - \theta_0)^2}=r^2.
\end{align*}
It follows that $L(r)=2\pi r$, proving
 $K(e_{+},f_{1 d})=0$.

For (8), 
in a similar way, we have
\begin{align*}
 W(\theta_0, \theta ) 
=4 r^2 \sin^2 \frac12 (\theta-\theta_0) 
-\frac{4 r^4\lambda_i \lambda_j  \sin^2(\theta-\theta_0)}
{(\lambda_i +\lambda_j)^2a_r(\theta_0,\theta)}+o(|\theta-\theta_0|^2),    
\end{align*}
where
\begin{align*}
a_r(\theta_0,\theta )
 =&\lambda_i
  \left[ (1+\ccij)(1+\cij)+  \ssij \sij \right]\\
  &+\lambda_j
  \left[(1-\ccij)(1-\cij)+ \ssij  \sij \right].
\end{align*}
Since the limit of $a_r(\theta_0,\theta )$ exists 
as $\theta\rightarrow \theta_0$ and
\[
a_r(\theta_0,\theta_0 )
 =(\lambda_i+\lambda_j)(1+r^2)+2(\lambda_i-\lambda_j)r\coo,
\]
we have 
\[
 \lim_{\theta\rightarrow \theta_0}
 \frac{W(\theta_0,\theta)}{(\theta - \theta_0)^2}
=r^2-\frac{4r^4\lambda_i\lambda_j }
  {(\lambda_i+\lambda_j)^2a_r(\theta_0,\theta_0)}.
\]
It follows that
\begin{align*}
  L(r)
&=\int_0^{2\pi} 
 r\left(1-\frac{4r^2\lambda_i\lambda_j }
 {(\lambda_i+\lambda_j)^2a_r(\theta,\theta)}\right)^{\frac12} d\theta \\
&=\int_0^{2\pi} 
 r\left(1-\frac12 \frac{4r^2\lambda_i\lambda_j }
 {(\lambda_i+\lambda_j)^2a_r(\theta,\theta)} +o(r^2)\right) d\theta . 
\end{align*}
Because $a_0(\theta,\theta)=\lambda_i+\lambda_j$, 
using Lemma~\ref{taylor} and
 the bounded convergence theorem, 
we obtain 
\[
 K(e_{i j},f_{i j})
=\frac{12 \lambda_i\lambda_j}{(\lambda_i+\lambda_j)^3}.
\]

\begin{flushleft}
\textbf{Proof of (3) and (7)}  
\end{flushleft}
For (3), 
assuming $i=1$, 
take  
$A=e_+$, $B=f_{1j}$ and $I=\{1,j,d\}$, 
whereas,  for (7), 
assuming $j<k$, 
take $A=e_{i k}$, $B=f_{i j}$ and $I=\{i,j,k\}$.
Since for any $\alpha,\beta \notin I$,
$(\alpha,\beta)$-components of $X$ 
are independent of the variables $r$ and $\theta$,
we obtain 
\begin{align*}
& W(\theta_0,\theta)\\
& =\tr \widetilde{X}(\theta_0)+\tr \widetilde{X}(\theta)
   -2\tr\left(\widetilde{X}(\theta_0)^{\frac{1}{2}}
             \widetilde{X}(\theta)
            \widetilde{X}(\theta_0)^{\frac{1}{2}}\right)^{\frac12} \\
& =\tr \widetilde{Y}(\theta_0)+\tr \widetilde{Y}(\theta)
   -2\tr\left(\widetilde{Y}(\theta_0)^{\frac{1}{2}}
             \widetilde{Y}(\theta)
            \widetilde{Y}(\theta_0)^{\frac{1}{2}}\right)^{\frac12}
 +\frac{r^2\lambda_\gamma}{\lambda_\alpha+\lambda_\gamma}(\co-\coo )^2,
\end{align*}
where 
\[
\widetilde{X}(\theta)
 =\begin{pmatrix}
   x_{\alpha\alpha}&x_{\alpha\beta}&x_{\alpha\gamma} \\
   x_{\beta\alpha}&x_{\beta\beta}&x_{\beta\gamma} \\
   x_{\gamma\alpha}&x_{\gamma\beta}&x_{\gamma\gamma} \\
  \end{pmatrix}
=\begin{pmatrix}
   \widetilde{Y}(\theta)& \ten\  \mathbf{0} \\
   \mathbf{0}& \lambda_{\gamma}(1+\crg)^2
  \end{pmatrix},\quad
\mathbf{0}=(0,0)
\]
and $\{\alpha,\beta,\gamma\}=I$.
Using Lemma~\ref{square},  
we conclude 
\begin{align*}
 W(\theta_0, \theta ) 
&=4r^2\sin^2\frac12(\theta-\theta_0)
-\frac{r^4}{a_r(\theta ,\theta _0)}
  \frac{\lambda_\alpha \lambda_\beta \sin^2(\theta -\theta _0)}
   {(\lambda_\alpha+\lambda_\beta)
   (\lambda_\alpha+\lambda_\gamma)} +o(\theta^2),
\end{align*}
where
\[
 a_r(\theta ,\theta _0)
=\lambda_\alpha(1+\ccag)(1+\crg)
+r^2\sin\theta_0\sin\theta+\lambda_\beta.
\]
Since the limit of $a_r(\theta_0,\theta )$ exists 
as $\theta\rightarrow \theta_0$,
we have
\[
 \lim_{\theta\rightarrow \theta_0}
 \frac{W(\theta_0,\theta)}{(\theta - \theta_0)^2} 
=r^2 -\frac{r^4}{a_r(\theta_0 ,\theta _0)} 
 \frac{\lambda_\alpha \lambda_\beta}
  {(\lambda_\alpha+\lambda_\beta)(\lambda_\alpha+\lambda_\gamma)}.
\]
It follows that
\begin{align*}
 L(r)=\int_0^{2\pi} 
        r \left(1-\frac{1}{2}\frac{r^2}{a_r(\theta ,\theta)} 
          \frac{\lambda_\alpha \lambda_\beta}
             {(\lambda_\alpha+\lambda_\beta)(\lambda_\alpha+\lambda_\gamma)} 
      +o(r^2)\right)  d \theta. 
\end{align*}
Because $a_0(\theta,\theta)=(\lambda_\alpha+\lambda_\beta)$, 
using Lemma~\ref{taylor} and the bounded convergence theorem, 
we obtain 
\[
 K(A,B)=\frac{3\lambda_\alpha \lambda_\beta}
             {(\lambda_\alpha+\lambda_\beta)^2
              (\lambda_\alpha+\lambda_\gamma)}.
\]
We can prove the case of  $i\neq1$ and $j=d$ 
in a similar way. 
\begin{flushleft}
\textbf{Proof of (4),(6) and (9)}  
\end{flushleft}
We take $(A,B)$ in \eqref{cov} as    
$(e_+,f_{kl})$  ($\{1,d\}\cup\{k,l\}=\emptyset$),  
$(e_{i j},f_{kl})$  ($\{i,j\}\cup\{k,l\}=\emptyset$) and 
$(f_{i j},f_{kl})$ ($\{i,j\}\cup\{k,l\}=\emptyset$) 
in this order.
Moreover we set 
$I=\{1,d\}$ in the case (4) and 
$I=\{i,j\}$ in the case of (6) and (9).
We notice that 
for any $\alpha,\beta \notin I$,
$(\alpha,\beta)$-components of $X$ 
are independent of the variables $r$ and $\theta$.
If we set
\begin{align*}
  \widetilde{X}_c(\theta)=
  \begin{pmatrix}
     x_{\alpha\alpha} &x_{\alpha\beta} \\
     x_{\beta\alpha} &x_{\beta\beta}
 \end{pmatrix},\ 
 \widetilde{X}_s(\theta)=
   \begin{pmatrix}
     x_{k k} &x_{k l} \\
     x_{l k} &x_{l l}
   \end{pmatrix},
\end{align*}
we obtain 
\begin{align}\label{saigo}
 W(\theta_0,\theta)
=&\tr \widetilde{X}_c(\theta_0)+\tr \widetilde{X}_c(\theta)
   -2\tr\left(\widetilde{X}_c(\theta_0)^{\frac{1}{2}}
             \widetilde{X}_c(\theta)
            \widetilde{X}_c(\theta_0)^{\frac{1}{2}}\right)^{\frac12}\\ 
&+\tr \widetilde{X}_s(\theta_0)+\tr \widetilde{X}_s(\theta) \notag
   -2\tr\left(\widetilde{X}_s(\theta_0)^{\frac{1}{2}}
             \widetilde{X}_s(\theta)
            \widetilde{X}_s(\theta_0)^{\frac{1}{2}}\right)^{\frac12},
\end{align}
where 
$\{\alpha,\beta\}=I$.
Using Lemma~\ref{square}, we conclude 
\[
 \lim_{\theta\rightarrow \theta_0}
 \frac{W(\theta_0,\theta)}{(\theta - \theta_0)^2}
 =\left(\lim_{\theta\rightarrow \theta_0}
 \frac{r\sin (\theta - \theta_0)}{\theta - \theta_0}\right)^2 \
=r^2.
\]
It follows that $L(r)=2 \pi r$ and 
$K(A,B)=0$.
\begin{flushleft}
\textbf{Proof of (10)}  
\end{flushleft}
Without loss of generality, 
we may assume $j<k$.
Taking  $A$ and $B$ as 
$f_{i j}$ 
and $f_{i k}$ in \eqref{cov} respectively.
We  notice that 
for any $\alpha,\beta \notin \{i,j,k\}$,
$(\alpha,\beta)$-components of $X$ 
are independent of the variables $r$ and $\theta$.
If we set
\begin{align*}
 &\widetilde{X}(\theta)=
  \begin{pmatrix}
     x_{ii} &x_{i j} &x_{i k} \\
     x_{j i} &x_{j j} &x_{j k} \\
     x_{k i} &x_{k j} &x_{k k}
  \end{pmatrix}\\
 &=\begin{pmatrix}
    \lambda_i+\lambda_j\cij^2+\lambda_k\sik^2 &(\LIJ)\cij & (\IK)  \sik \\
    (\LIJ) \cij & \lambda_j+\lambda_i\cij^2 & \lambda_i\cij\sik \\
     (\IK) \sik & \lambda_i \cij\sik &\lambda_k+\lambda_i\sik^2
   \end{pmatrix},
\end{align*}
we obtain 
\begin{align}\label{sixth}
W(\theta_0,\theta) 
& =\tr \widetilde{X}(\theta_0)+\tr \widetilde{X}(\theta)
   -2\tr\left(\widetilde{X}(\theta_0)^{\frac{1}{2}}
             \widetilde{X}(\theta)
            \widetilde{X}(\theta_0)^{\frac{1}{2}}\right)^\frac12.
\end{align}
For the value of the last term in \eqref{sixth},
Lemma~\ref{square}  can not be used 
as the size of matrices is $3 \times 3$.

We define some notations:
\begin{align*}
 &A=A_{\theta_0}(\theta)
  =\widetilde{X}(\theta_0)^{\frac{1}{2}}
    \widetilde{X}(\theta)
    \widetilde{X}(\theta_0)^{\frac{1}{2}} \\
 &B=B_{\theta_0}(\theta)
  =\left(\widetilde{X}(\theta_0)^{\frac{1}{2}}
          \widetilde{X}(\theta)
          \widetilde{X}(\theta_0)^{\frac{1}{2}}\right)^{\frac12} \\
&\{\sigma_\alpha =\sigma_{\theta_0} (\theta)_\alpha\}_{\alpha=1}^{3}
  :\textrm{eigenvalues of }B \\
 &f_{\theta_0}(\theta)
 =\tr B =\sigma_1+\sigma_2+\sigma_3  \\
 &g_{\theta_0}(\theta)
 =\tr A =\sigma_1^2+\sigma_2^2+\sigma^2_3  \\
 &h_{\theta_0}(\theta)
 =\sigma_1\sigma_2+\sigma_2\sigma_3+\sigma_3\sigma_1  \\
 &\varphi_{\theta_0}(\theta)
 =\sigma_1^2\sigma_2^2+\sigma_2^2\sigma_3^2+\sigma_3^2\sigma_1^2  \\
 &D_{\theta_0}(\theta)
 =\det B=(\det A)^{\frac12}=\sigma_1\sigma_2\sigma_3 
\end{align*}
Rewriting \eqref{sixth} with the Taylor approximation 
of $f_{\theta_0}(\cdot)$ at $\theta_0$, 
we obtain
\begin{align*}
 W(\theta_0,\theta)
&=-2f^{\prime}_{\theta_0}(\theta_0)(\theta-\theta_0)
  -f^{\prime \prime}_{\theta_0}(\theta_0)(\theta-\theta_0)^2
  +o(|\theta-\theta_0|^2).
\end{align*}
Since we can get the values of $g$, $\varphi$ and $D$
without information of $X^{1/2}$,
we compute $f^{\prime}$ and $f^{\prime\prime}$
by using these values.

We calculate $f^{\prime}_{\theta_0}(\theta_0)$ first.
Differentiating 
$ B_{\theta_0}(\theta)\cdot B_{\theta_0}(\theta) =A_{\theta_0}(\theta)$
with respect to $\theta$,
we have 
\[
 B^{\prime}_{\theta_0}(\theta)   B_{\theta_0}(\theta) 
 + B_{\theta_0}(\theta)   B^{\prime}_{\theta_0}(\theta) 
 =A^{\prime}_{\theta_0}(\theta)
 =\widetilde{X}(\theta_0)^{\frac{1}{2}}
    \widetilde{X}^{\prime}(\theta)
    \widetilde{X}(\theta_0)^{\frac{1}{2}}.
\]
After multiplying  $B_{\theta_0}(\theta)^{-1}$ from the left, 
taking the trace gives
\begin{align*}
 \tr  B^{\prime}_{\theta_0}(\theta_0)  
 +\tr (B_{\theta_0}(\theta_0)   B^{\prime}_{\theta_0}(\theta_0)  
     B_{\theta_0}(\theta_0)^{-1})
 &=2f^{\prime}_{\theta_0}(\theta_0)
\end{align*}
at $\theta=\theta_0$.
Because $\tr \widetilde{X}(\theta)$ is constant, 
at $\theta=\theta_0$ the right hand side is equal to
\begin{align*}
 \tr (\widetilde{X}(\theta_0)^{\frac{1}{2}}
    \widetilde{X}^{\prime}(\theta_0)
    \widetilde{X}(\theta_0)^{\frac{1}{2}}
    \widetilde{X}(\theta_0)^{-1})
 =\tr \widetilde{X}^{\prime}(\theta_0) 
 =\left(\tr \widetilde{X}(\theta)\right)^{\prime}
   \bigg|_{\theta =\theta_0} =0.
\end{align*}
Therefore we conclude 
\begin{equation}\label{ikkai}
 f^{\prime}_{\theta_0}(\theta_0)=0.
\end{equation}

Next we compute $f^{\prime \prime}_{\theta_0}(\theta_0)$.
Differentiating 
 $f^2=g+2h$ at $\theta=\theta_0$,  
we have
\[
  2f_{\theta_0}(\theta_0)f^{\prime}_{\theta_0}(\theta_0)
 =g^{\prime}_{\theta_0}(\theta_0)+2h^{\prime}_{\theta_0}(\theta_0),
\]
proving 
\[
 2h^{\prime}_{\theta_0}(\theta_0)=-g^{\prime}_{\theta_0}(\theta_0)
\]
because of \eqref{ikkai}. 
Differentiating once more,
\[
 f^{\prime\prime}_{\theta_0}(\theta)
 =-\frac{f^{\prime}_{\theta_0}(\theta)}
        {2f_{\theta_0}(\theta)^2}
   \left(
        g^{\prime}_{\theta_0}(\theta)
        +2h^{\prime}_{\theta_0}(\theta)
   \right)
  +\frac{g^{\prime \prime}_{\theta_0}(\theta)+2h^{\prime \prime}_{\theta_0}(\theta)}{2f_{\theta_0}(\theta)}.
\]
Because of \eqref{ikkai}, 
we get at $\theta=\theta_0$
\begin{gather}\label{fsecond} 
  f^{\prime\prime}_{\theta_0}(\theta_0)
 =\frac{g^{\prime \prime}_{\theta_0}(\theta_0)
        +2h^{\prime \prime}_{\theta_0}(\theta_0)}
        {2f_{\theta_0}(\theta_0)}.
\end{gather}
We compute directly
\begin{align*}
 g_{\theta_0}(\theta)
 =\sum_{\alpha, \beta \in \{i,j,k\}} x_{\alpha\beta}(\theta_0)x_{\beta\alpha}(\theta).
\end{align*}
This enables us to get the derivatives of $ g_{\theta_0}(\theta)$.
Because $B_{\theta_0}(\theta_0)=X(\theta_0)$, 
using the relation 
\[
 \det(t E-B)=t^3-t^2\cdot f+t \cdot h-D,
\]
we have
\begin{align*}
 h_{\theta_0}(\theta_0)
&=\sum_{\begin{array}{c}
        \alpha,\beta \in \{i,j,k\} \atop \alpha \neq \beta 
\end{array}}
  \left(x_{\alpha\alpha}(\theta_0)x_{\beta\beta}(\theta_0)
       -x_{\alpha\beta}(\theta_0)^2\right).
\end{align*}
While it is hard to compute $B_{\theta_0}(\theta)$ directly,
it is also hard to 
know the value of $h_{\theta_0}(\theta)$.
We want to derive $h^{\prime\prime}_{\theta_0}(\theta)$ 
without the information of $B_{\theta_0}(\theta)$.
So differentiating  
 $h^2=\varphi+2D f$
twice,
we have 
\begin{align*}
  2(h^{\prime}_{\theta_0}(\theta))^2+
 2h_{\theta_0}(\theta) h^{\prime\prime}_{\theta_0}(\theta) 
 =\varphi^{\prime\prime}_{\theta_0}(\theta)+
   4D^{\prime}_{\theta_0}(\theta)f^{\prime}_{\theta_0}(\theta)
   +2D^{\prime\prime}_{\theta_0}(\theta)f_{\theta_0}(\theta)
   +2D_{\theta_0}(\theta)f^{\prime\prime}_{\theta_0}(\theta).
\end{align*}
At $\theta=\theta_0$, 
we have 
\begin{equation}\label{hsecond}
 h^{\prime\prime}_{\theta_0}(\theta_0) 
 =-\frac{g^{\prime}_{\theta_0}(\theta_0)^2}{4h_{\theta_0}(\theta_0)}
  +\frac{
 \varphi^{\prime\prime}_{\theta_0}(\theta_0)
      +2D^{\prime\prime}_{\theta_0}(\theta_0)f_{\theta_0}(\theta_0)
      +2D_{\theta_0}(\theta_0)f^{\prime\prime}_{\theta_0}(\theta_0)
           }{2h_{\theta_0}(\theta_0)}.
\end{equation}
In order to analyze \eqref{hsecond},
we consider $D_{\theta_0}(\theta)$ and $\varphi_{\theta_0}(\theta)$.
From the definition, we can compute $D_{\theta_0}(\theta)$ directly as 
\begin{align*}
D_{\theta_0}(\theta) 
=\lambda_i\lambda_j\lambda_k 
  \left[1-(\ccij^2+\ssik^2)\right]
  \left[1-(\cij^2+\sik^2)\right]. 
\end{align*}
We next consider  $\varphi_{\theta_0}(\theta)$.
Using the equation 
\begin{align*}
\det (t E-A)
=\det \widetilde{X}(\theta_0) \cdot
  \det (t\widetilde{X}(\theta_0)^{-1}-\widetilde{X}(\theta)),
\end{align*}
and the relation
\begin{equation*}
 \det(t E-A)=t^3-t^2\cdot g(\theta)+t \cdot \varphi-D^2 ,
\end{equation*}
we conclude
\begin{align}\label{ppp}
 \varphi_{\theta_0}(\theta)
&=\det (\widetilde{X}(\theta_0) \widetilde{X}(\theta))
  \cdot \tr (Y(\theta_0)Y(\theta)),\ 
\text{where }
Y(\theta)=\widetilde{X}(\theta)^{-1}.
\end{align}
Since \eqref{ppp} depends only on $\widetilde{X}(\theta)$,
we can obtain the value of $\varphi_{\theta_0}(\theta)$.
Therefore we can now specify the value of 
$h^{\prime\prime}_{\theta_0}(\theta_0)$ in\eqref{hsecond}.

Inserting \eqref{hsecond} into \eqref{fsecond},
we obtain
\[
 W(\theta_0,\theta)
=-f^{\prime\prime}_{\theta_0}(\theta_0)+o(|\theta-\theta_0|^2)
=-\frac{\beta_r(\theta_0)}{\alpha_r(\theta_0)}+o(|\theta-\theta_0|^2),
\]
where
\begin{align}
 \label{bunbo}
 \alpha_r(\theta_0)
 =&2\left[f_{\theta_0}(\theta_0)h_{\theta_0}(\theta_0)
  -D_{\theta_0}(\theta_0) \right]\\ \notag
 =&2\IJK\\ \notag
 &+r^2 \left[\lambda_j^2+\lambda_k^2
       +4\lambda_j\lambda_k
       +\lambda_i\lambda_j
       +\lambda_i\lambda_k 
+(\lambda_j-\lambda_k)(\lambda_j+\lambda_k+3\lambda_i)
       \cos \theta_0 
 \right],\\ 
 \label{bunshi}
 \beta_r(\theta_0)
 =&h_{\theta_0}(\theta_0)g^{\prime\prime}_{\theta_0}(\theta_0)
 -\frac12g^{\prime}_{\theta_0}(\theta_0)^2
 +\varphi^{\prime\prime}_{\theta_0}(\theta_0)
 +2D^{\prime\prime}_{\theta_0}(\theta_0) \\ \notag
 =&-2r^2\IJK \\ \notag
 &-r^4\left[(\JK)(\LIJ+\lambda_k)
       +( \lambda_j-\lambda_k ) ( \lambda_j+\lambda_k+3\lambda_i )
       \cos \theta_0 
 \right]. 
\end{align}
Therefore we have
\[
 W(\theta_0)
=\lim_{\theta \to \theta_0}
 \frac{W(\theta,\theta_0)}{\theta^2}
=-\frac{\beta_r(\theta_0)}{\alpha_r(\theta_0)}. 
\]
If we set 
\begin{align*}
&L=2\IJK,\\
&a=\lambda_j^2+\lambda_k^2
       +4\lambda_j\lambda_k
       +\lambda_i\lambda_j
       +\lambda_i\lambda_k
       +(\lambda_j-\lambda_k)(\lambda_j+\lambda_k+3\lambda_i)
       \cos \theta, \\
&b=(\JK)(\LIJ+\lambda_k)
       +(\lambda_j-\lambda_k)(\lambda_j+\lambda_k+3\lambda_i)
       \cos \theta,
\end{align*}
we have
\begin{align*}
 L(r)
=\int_0^{2 \pi} r 
  \left(1
      +\frac{r^2(b-a)}{2(L+r^2a)}
      +o(r^2)\right) d\theta.
\end{align*}
Using Lemma~\ref{taylor} and the bounded convergence theorem, 
we obtain 
\begin{align*}
 2 \pi \frac{K(u,v)}{6}
= \int_0^{2\pi} \lim_{r\searrow 0}
        \frac{a-b}{2(L+r^2a)}
      d \theta 
= 2\pi  \frac{a-b}{2L},     
\end{align*}
which implies that 
\[
 K(f_{i j},f_{i k})=
\frac{3\lambda_k\lambda_j}{\IJK}.
\]

This completes the proof of Theorem~\ref{curv}. 

\section{Remarks to Theorem~\ref{curv}}
In this section
we consider the case $d=2$ in particular.
\subsection{Geometric interpolations of Theorem~\ref{curv}}
\begin{lemma} \label{rotation}
Any $V =(v_{i j })\in \SSym$ is diagonalized 
by some special orthogonal matrix.
In other word, there exists some 
$\theta \in \R$ such that the rotation matrix 
\[
 R(\theta)=
 \begin{pmatrix}
 \cos \theta   & -\sin \theta \\
 \sin \theta  &  \cos \theta\\
 \end{pmatrix}
\]
 diagonalizes $V$. 
\end{lemma}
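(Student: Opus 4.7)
The plan is to prove the lemma by direct computation: namely, to show that the off-diagonal entry of $R(\theta)^\top V R(\theta)$ is a continuous function of $\theta$ that vanishes for some choice of $\theta$, so that the conjugated matrix is diagonal. Writing $V = \begin{pmatrix} a & c \\ c & b \end{pmatrix}$ with $a,b,c \in \R$, the straightforward matrix multiplication gives
\[
 \bigl(R(\theta)^\top V R(\theta)\bigr)_{12}
 = \frac{b-a}{2}\sin(2\theta) + c\cos(2\theta),
\]
since the off-diagonal combination $-a\sin\theta\cos\theta - c\sin^2\theta + c\cos^2\theta + b\sin\theta\cos\theta$ rearranges via the double-angle identities into the expression above.

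Next I would solve for $\theta$ making this quantity zero. If $c = 0$, take $\theta = 0$. If $a = b$ and $c \neq 0$, take $\theta = \pi/4$, so that $\cos(2\theta) = 0$. Otherwise, the equation reduces to $\tan(2\theta) = 2c/(a-b)$, which always has a real solution $\theta \in \R$. In every case, the resulting $R(\theta) \in SO(2)$ diagonalizes $V$.

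Alternatively, one could invoke the spectral theorem to obtain an orthogonal $P$ with $P^\top V P$ diagonal, and then note that in dimension two $O(2)$ is the disjoint union of $SO(2)$ and the reflections; if $\det P = -1$, replacing $P$ by $P \cdot \mathrm{diag}[1,-1]$ produces an element of $SO(2)$ that still diagonalizes $V$ (reordering the diagonal entries, if necessary). Since $SO(2) = \{R(\theta) : \theta \in \R\}$, this also yields the claim. There is no real obstacle: both approaches are elementary and essentially routine 2$\times$2 computations, and I would favor the direct trigonometric approach as it displays $\theta$ explicitly in terms of $a,b,c$.
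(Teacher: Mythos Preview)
Your proof is correct and follows essentially the same approach as the paper: compute the off-diagonal entry of $R(\theta)^\top V R(\theta)$ explicitly, obtain a linear combination of $\sin 2\theta$ and $\cos 2\theta$, and solve for $\theta$ (the paper phrases the non-degenerate case as $\cot 2\theta = (v_{11}-v_{22})/(2v_{12})$, while you equivalently write $\tan 2\theta = 2c/(a-b)$ and separate the case $a=b$). The alternative spectral-theorem route you mention is also fine but is not what the paper does.
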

\begin{proof}
In the case of $v_{12}=0$, 
we set $\theta=0$.
While in the case of $v_{12}\neq 0$, 
since
\begin{align*}
 \ten R(\theta)V R(\theta) 
=
 \begin{pmatrix}
  v_{11}\cos^2 \theta +v_{12}\sin 2 \theta +  v_{22}\sin^2 \theta
& v_{12}\cos 2 \theta  +2^{-1}(v_{11}-v_{22})\sin2 \theta \\
  v_{12}\cos 2 \theta  +2^{-1}(v_{11}-v_{22})\sin2 \theta 
& v_{22}\cos^2 \theta -v_{12}\sin 2 \theta +  v_{11}\sin^2 \theta
 \end{pmatrix},
\end{align*}
$ \ten R(\theta)V R(\theta)$ is a diagonal matrix if and only if 
\begin{equation}\label{cot}
 v_{12}\cos 2 \theta =-2^{-1}(v_{11}-v_{22})\sin2 \theta
 \Leftrightarrow 
 \cot 2\theta = (v_{11}-v_{22})/2v_{12}.
\end{equation}
Because $\cot 2 \theta$ can take any value,  
\eqref{cot} always holds true.
\end{proof}
For $\alpha,\beta>0$,
we denote
\[
 (\alpha,\beta;\theta)=N\left(R(\theta) 
 \begin{pmatrix}
 \alpha ^2& 0 \\
        0 & \beta ^2 \\
 \end{pmatrix}
 \ten R(\theta)\right).\]
We abbreviate $\N^2_0(R(\theta))$ 
as $\N^2_0(\theta)$.
We also set 
 $\Lambda=
 \{(\lambda,\lambda;\theta) \ |\ \lambda >0 \ \}.$
Since $(\alpha,\beta;\pi/2+\theta)=(\beta,\alpha;\theta)$,
the expression $(\alpha,\beta;\theta)$ is not 
a global coordinate system.
Even if we consider under modulo $\pi/2$, 
there is no uniqueness of diagonalizing matrix 
if $\alpha$ is equal to $\beta$. 

Throughout this section, 
we fix $\rho=(\alpha,\beta;0)$.
We regard Gaussian measures $(\alpha,\beta;\theta)$ as ellipsoids:
$(\alpha,\beta)$ specifies the length of the axes  with 
the angle $\theta$ of major and minor axes.
Let $X,Y$ and $Z$ be matrices defined by
\begin{equation}\label{xyz}
X=e_{11}=\frac{1}{\gamma} 
          \begin{pmatrix}
                  1& 0 \\
                   0&-1\\
           \end{pmatrix},\ 
Y=f_{12}=\frac{1}{\gamma}
          \begin{pmatrix}
                  0& -1 \\
                 -1&  0\\
           \end{pmatrix},\ 
Z=e_+=\frac{1}{\gamma} 
          \begin{pmatrix}
                  1& 0 \\
                  0& 1\\
           \end{pmatrix}
\end{equation}
where $\gamma=(\alpha ^2 +\beta ^2)^{1/2}$.
Using this expression, we get 
\begin{align*}
&\exp _{\rho} r X
=N(U),
&U&=\frac{1}{\gamma ^2} 
     \begin{pmatrix}
         \alpha ^2 (\gamma +r)^2& 0 \\
         0&\beta ^2 (\gamma -r)^2  \\
       \end{pmatrix}, \\
&\exp _{\rho} r Y
=N(V),
&V&=\frac{1}{\gamma ^2} 
     \begin{pmatrix}
       \alpha ^2 \gamma ^2 + \beta ^2 r^2 & \gamma ^3 r \\
       \gamma ^3 r &  \alpha ^2 r^2 + \beta ^2 \gamma ^2 \\
       \end{pmatrix} ,\\
&\exp _{\rho} r Z
=N(W),
&W&=\frac{1}{\gamma ^2} 
     \begin{pmatrix}
         \alpha ^2 (\gamma +r)^2& 0 \\
         0&\beta ^2 (\gamma +r)^2  \\
       \end{pmatrix}. 
\end{align*}

%
We notice that $Y$ changes the axial angle of the ellipsoid,
while $X$ and $Z$ do not, see Figure~1.

\quad

\quad
 
\begin{center}
\input{fig.tex}
\end{center}

\quad

\quad

Consequently, $X,Z \in T_{\rho}\N^2_0(0)$ and $K(X,Z)=0$ 
by Corollary~\ref{flat}. 
For changing the axial angle of ellipsoid,
the sectional curvature can not vanish.
\subsection{Correspondence to other results}
First, we consider the correspondence to the result of Otto~\cite{Ot}.
He obtained an explicit expression 
of sectional curvatures of $\ppac$ formally. 
By making this method rigorous, 
we give an explicit expression 
of sectional curvatures of $\N^d$ in~\cite{takatsu}.
He introduced a manifold $\mm$ 
which consists of all diffeomorphisms of $\R ^d$ 
and an isometric submersion from $\mm$ into $\ppac$
(he also sloppied about a differential structure of $\mm$.)
He defined a metric $g^*$ on $\mm$ 
which carried the geometry of the $L^2$-space.
Therefore, ($\mm$,$g^*$) is flat.
Using O'Neill's  formula~\cite{oni}, 
he showed the sectional curvatures of $\ppac$ is given by
\begin{align*} 
K(\psi _1,\psi _2)\det\left(g_\rho (\psi _i,\psi _j)\right)
=\frac{3}{4}\int _{\R ^d}\rho |u|^2 \geq 0,
\end{align*}
where $\rho \in \ppac$ and 
$\psi _1$, $\psi _2$, $\psi$ are tangent vectors at $\rho$
 given by 
\[
u=\nabla \psi -[\nabla\psi _1,\nabla \psi _2]
\quad \text{and} \quad
\mathrm{div}(\rho (\nabla \psi-[\nabla\psi _1,\nabla \psi _2]))=0.
\]
This guarantees that $\ppac$ is a space of non-negative curvature.
In addition, $K(\psi_1,\psi _2)=0$ if and only if
$\mathrm{Hess} \psi _1$ and $\mathrm{Hess} \psi _2$ 
pointwise commute.
For $X$, $Y$ and $Z$ in \eqref{xyz},
let $\X$, $\Y$ and $\Z$ be functions whose gradients 
are $X$, $Y$ and $Z$, respectively. 
Since $Z$ is pointwise commutative with the Hessian of any functions,
$K(\X,\Z)=K(\Y,\Z)=0$ follows. 
In the case of $\X,\Y$, we demonstrate 
that Theorem~\ref{curv} coincides with Otto's result.

Let $\rho_0$ be the standard Gaussian measure on $\R^2$, 
that is $\rho_0=(1,1;\theta)$.
Moreover, we define a Gaussian measure $\rho$ and
a diffeomorphism $\Psi$ respectively as follows:
\begin{align*}
\rho=(\alpha,\beta;\theta),\quad
\Psi(x)=\frac{1}{\gamma}R 
          \begin{pmatrix}
          \alpha^2 & 0\\
               0   &\beta^2\\
          \end{pmatrix} 
 \ten Rx,
\end{align*}
where $\gamma=(\alpha^2+\beta^2)^{1/2}$ and $R=R(\theta)$.
Then, the submersion sends $\Psi$ to $\rho$.
We choose tangent vectors $\psi_1,\psi_2$ at $\rho$ 
as $\X$ and $\Y$, 
corresponding to $X$ and $Y$ respectively.
In terms of Otto's result, 
we conclude that
\begin{align*}
&g_{\rho}(\psi_i,\psi_j)
 =\delta_{i j} \quad (i,j=1,2) , \\
&[\nabla \psi _1,\nabla \psi _2](x)
 =\frac{2}{\gamma ^2} R 
  \begin{pmatrix}
      0&1\\
      1&0\\
  \end{pmatrix} \ten Rx,\\
&\psi(x)
=\frac{1}{\gamma ^4} \ten x R 
   \begin{pmatrix}
    0&\alpha^2 -\beta ^2\\
  \alpha ^2-\beta ^2&0\\
   \end{pmatrix} \ten Rx, \\
&u(x)= \frac{4}{\gamma ^4} R 
         \begin{pmatrix}
               0&  \alpha ^2 \\
        -\beta ^2 &0 \\
        \end{pmatrix} \ten Rx.                              
\end{align*}
Finally, we obtain
\begin{align*}
K(\psi _1,\psi _2) 
&=\frac{3}{4\det(g_{\rho}(\psi _1,\psi _2))} 
                   \int _{\R ^2} |u(x)|^2 \rho(x) d x
=\frac{12\alpha ^2 \beta ^2 }{(\alpha^2 +\beta ^2)  ^3}.
\end{align*}
Thus we confirm the equivalence between 
Theorem \ref{curv} and Otto's result.
In \cite{takatsu}, 
the sectional curvature of $\N^d$ was also obtained 
using Riemannian submersion.
\subsection{$\N^d_0$ as Alexandrov spaces.}
Next, we consider the correspondence to results 
when we regard $\ppac$ and $\N^d$ as Alexandrov spaces.
Details can be found in ~\cite{takatsu}.

It is well-known that
$L^2$-Wasserstein space over 
an Alexandrov space of non-negative curvature
is also an Alexandrov space of non-negative curvature
(see~\cite[Proposition~2.10]{st}.)
Therefore $(\mathcal{P}_2(\R^d),W_2)$ is  
an  Alexandrov space of non-negative curvature.
Since $\N^d_0 \subset \pr$ 
is a geodesically convex subset,
$(\N^d_0,W_2)$ is also an Alexandrov space 
of non-negative curvature
(But it is not complete,
the completion of $\N^d$ is given in~\cite{takatsu}.)

Lott and Villani~\cite{lv} made Otto's results rigorous 
by looking at the space of probability measures as an Alexandrov space,
They treated the space of probability measures $\ppm$ over 
a smooth compact connected manifold $M$, 
and proved  that $M$ has non-negative sectional 
curvature if and only if $\ppm$ has non-negative Alexandrov curvature 
(\cite[Theorem A.2]{lv}.)
They moreover defined the angle between 
the geodesics in $\pmac$ (\cite[Theorem A.17]{lv}.)
We demonstrate it in the case of $M$ as $\R ^d $, 
while $\R^d$ is not a compact manifold. 
Fix $\rho \in \ppac$.
If $\phi$ is a function on $\R^d$ so that 
$\phi+|\cdot|^2/2$ is convex,
then  it is regarded as a tangent vector of $\ppac$ at $\rho$.
Let $\phi$ and $\psi$ be such functions.
If we set
\begin{align*}
\mu(t)=[\id+t\phi]_{\sharp}\rho,\ \nu(t)=[\id+t\psi]_{\sharp}\rho
\end{align*}
for $t\in[0,1]$,
then $\mu(t)$ and $\nu(t)$ are geodesics starting at $\rho$.
The angle between $\mu(t)$ and $\nu(t)$ is given by
\begin{align*}
\cos \measuredangle (\mu, \nu)
&=\frac{\int_{\R^d} \langle \nabla \phi(x),\nabla \psi (x)\rangle d \rho (x)}
       {\sqrt{\int _{\R^d}|\nabla \phi (x)|^2 d \rho (x)}
       \sqrt{\int _{\R^d}|\nabla \psi  (x)|^2 d \rho (x)}}.
\end{align*}
Corresponding to this, 
we can measure the angle between 
$\N^d_0(\theta)$ and $\N^d_0(\varphi)$. 
\begin{prop} \label{angleprop}
For $\theta$ and $\varphi \in \left(-\pi/4,\pi/4 \right] $, 
the angle between $\N^d_0(\theta)$ and $\N^d_0(\varphi)$ 
is $2|\theta -\varphi|$.
\end{prop}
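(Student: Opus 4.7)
The plan is to apply the Lott--Villani angle formula displayed just above the statement to two geodesics issuing from a common point in $\Lambda$, one moving transversally into $\N^d_0(\theta)$ and the other into $\N^d_0(\varphi)$. Pick any $\rho = N(\lambda^2 E) \in \Lambda$; since the scalar matrix $\lambda^2 E$ is diagonalized by every rotation, $\rho$ lies in both surfaces. For $\theta - \varphi \notin (\pi/2)\mathbb{Z}$, in fact $\N^d_0(\theta) \cap \N^d_0(\varphi) = \Lambda$, since a $2\times 2$ positive definite matrix with distinct eigenvalues is diagonalized by a unique rotation modulo $\pi/2$, so the intersection geometry forces us to work at $\Lambda$.

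Using the identification $T_\rho \N^d_0 \simeq \SSym$ of Theorem~\ref{mani}, the exponential reads $\exp_\rho tX = N(\lambda^2(E+tX)^2)$, which lies in $\N^d_0(\theta)$ iff $X$ is diagonalized by $R(\theta)$. This two-dimensional subspace is spanned by the isotropic $E$ (tangent to $\Lambda$) and the traceless
\[
X_\theta = R(\theta)\begin{pmatrix} 1 & 0 \\ 0 & -1\end{pmatrix}\ten R(\theta) = \begin{pmatrix} \cos 2\theta & \sin 2\theta \\ \sin 2\theta & -\cos 2\theta \end{pmatrix},
\]
and likewise $X_\varphi$ for $\varphi$. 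Because $\N^d_0(\theta)$ has codimension one above $\Lambda$, the transverse direction $X_\theta$ is canonical up to sign, which legitimizes the phrase ``angle between the two surfaces''. Set $\phi_\theta(x) = \frac{1}{2}\langle x, X_\theta x\rangle$ so that $\nabla\phi_\theta(x) = X_\theta x$, and let $\mu_\theta(t) = [\id + t\nabla\phi_\theta]_\sharp\rho$, which stays inside $\N^d_0(\theta)$ for small $t$; define $\phi_\varphi$ and $\mu_\varphi$ analogously.

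Feeding these into the angle formula and using the identity $\int\langle x, Mx\rangle\, d\rho = \lambda^2\tr M$ valid for isotropic $\rho$, the factor $\lambda^2$ cancels and
\[
\cos\measuredangle(\mu_\theta,\mu_\varphi) = \frac{\tr(X_\theta X_\varphi)}{\sqrt{\tr X_\theta^2 \cdot \tr X_\varphi^2}}.
\]
Direct multiplication of the $2\times 2$ matrices gives $X_\theta^2 = E$, hence $\tr X_\theta^2 = 2$, together with $\tr(X_\theta X_\varphi) = 2(\cos 2\theta \cos 2\varphi + \sin 2\theta \sin 2\varphi) = 2\cos 2(\theta-\varphi)$. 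Thus $\cos\measuredangle = \cos 2(\theta - \varphi)$, and the hypothesis $\theta,\varphi \in (-\pi/4, \pi/4]$ forces $2|\theta - \varphi| \in [0, \pi)$, so $\measuredangle = 2|\theta - \varphi|$ as claimed. The main obstacle is conceptual rather than computational: one has to recognise that although each surface is itself two-dimensional, along $\Lambda$ both tangent spaces share the isotropic direction $E$, so a canonical one-dimensional transverse direction survives in each and the formula produces a single well-defined angle; once that is clear, the remaining trace calculation is mechanical.
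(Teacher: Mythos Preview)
Your proof is correct and follows essentially the same route as the paper: pick a base point $\rho\in\Lambda$, identify the transverse direction to $\N^d_0(\theta)$ there as (a scalar multiple of) $R(\theta)\,\diag[1,-1]\,\ten R(\theta)$, and compute the angle via the metric $g_\rho$, obtaining $\cos 2(\theta-\varphi)$. The only cosmetic difference is that the paper reaches the transverse vector by first computing the nearest-point projection from $(\alpha,\beta;\theta)$ onto $\Lambda$, whereas you isolate it directly from the tangent-space decomposition $T_\rho\N^d_0(\theta)=\mathrm{span}\{E,X_\theta\}$; the ensuing trace calculation is identical.
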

\begin{proof}
Let $\theta$ and $\varphi$ be as above.
For any $\alpha,\beta ,\lambda >0$,  
\begin{align*}
W_2((\alpha,\beta;\theta),(\lambda,\lambda;\theta)) ^2
&=2\left(\lambda - \frac{\alpha + \beta}{2}\right) ^2 +\frac12 (\alpha -\beta )^2.
\end{align*}
Hence, the distance 
from $(\alpha,\beta;\theta)$ to $\Lambda$ 
is $(\alpha -\beta )/\sqrt2$ and 
the image of the nearest point projection 
is 
\[
 \rho=\left(\frac{\alpha + \beta}{2},
            \frac{\alpha + \beta}{2};\theta\right).
\]
Let $X(\theta)$ and $X(\varphi)$ be 
symmetric matrices given by
\begin{align*}
 & X(\theta)
=R(\theta)
\begin{pmatrix}
 (\alpha+\beta)^{-1}(\alpha - \beta) & 0 \\
 0 & (\alpha+\beta)^{-1}(\alpha - \beta) \\
 \end{pmatrix}
 R(-\theta) \\
& X(\varphi)
=R(\varphi)
\begin{pmatrix}
 (\alpha+\beta)^{-1}(\alpha - \beta) & 0 \\
 0 & (\alpha+\beta)^{-1}(\alpha - \beta) \\
 \end{pmatrix}
 R(-\varphi),
\end{align*}
then we get
\[
\exp_{\rho} X(\theta) =(\alpha ,\beta;\theta),
\quad
\exp _{\rho} X(\varphi) =(\alpha ,\beta;\varphi).
\]
Since 
\begin{align*}
g_{\rho}(X(\theta),X(\varphi))
=\frac{(\alpha+\beta)^2}{4} \tr X(\theta)X(\varphi)
=\frac12 (\alpha -\beta)^2\cos 2 (\theta -\varphi ) 
\end{align*}
and 
\[
 g_{\rho}(X(\theta),X(\theta))
=g_{\rho}(X(\varphi),X(\varphi))
=\frac12(\alpha -\beta )^2, 
\]
we have 
\begin{align} \label{angle}
\frac{g_{\rho}(X(\theta),X(\varphi))}
{\sqrt{g_{\rho}(X(\theta),X(\theta))g_{\rho}(X(\varphi),X(\varphi))}}
=\cos 2(\theta -\varphi). 
\end{align}
Therefore we conclude
\begin{align*}
\measuredangle (\N^d_0(\theta),\N^d_0(\varphi))
&=\mathrm{Arccos} \frac{g_{\rho}(X(\theta),X(\varphi))}
{\sqrt{g_{\rho}(X(\theta),X(\theta))g_{\rho}(X(\varphi),X(\varphi))}}
= 2|\theta -\varphi|. 
\end{align*}
\end{proof}


\clearpage

\def\cprime{$'$}
\providecommand{\bysame}{\leavevmode\hbox to3em{\hrulefill}\thinspace}
\providecommand{\MR}{\relax\ifhmode\unskip\space\fi MR }
\providecommand{\MRhref}[2]{%
  \href{http://www.ams.org/mathscinet-getitem?mr=#1}{#2}
}
\providecommand{\href}[2]{#2}

\affiliationone{
   Asuka Takatsu \\
  Mathematical Institute,  
  Tohoku University \\
  Sendai 980-8578, Japan
   \email{sa6m21@math.tohoku.ac.jp}}
\end{document}